\newtheorem{theorem}{Theorem}[section]
\newtheorem{proposition}[theorem]{Proposition}
\newtheorem{lemma}[theorem]{Lemma}
\newtheorem{corollary}[theorem]{Corollary}
\theoremstyle{definition}
\newtheorem{assumption}[theorem]{Assumption}
\newtheorem{remark}[theorem]{Remark}
\DeclareMathOperator{\Tr}{Tr} 
\DeclareMathOperator{\proj}{proj}
\DeclareMathOperator{\cl}{cl}
\DeclareMathOperator{\col}{col}
\DeclareMathOperator{\sol}{SOL}
\def\cali{{\mathcal I}}
\def\call{{\mathcal L}}
\def\caln{{\mathcal N}}
\def\cals{{\mathcal S}}
\def\calt{{\mathcal T}}
\def\calu{{\mathcal U}}
\def\calx{{\mathcal X}}
\def\caly{{\mathcal Y}}
\DeclareMathAlphabet{\mymathbb}{U}{bbold}{m}{n}
\newcommand{\bze}{\bm{0}}
\newcommand{\R}{\mathbb{R}}
\DeclareMathOperator*{\argmax}{arg\,max}
\DeclareMathOperator*{\argmin}{arg\,min}
\let\NAT@parse\undefined
\newcommand{\oprocendsymbol}{\hbox{$\bullet$}}
\newcommand{\oprocend}{\relax\ifmmode\else\unskip\hfill\fi\oprocendsymbol}
\newcommand{\squaresymbol}{\hbox{$\square$}}
\newcommand{\squarend}{\relax\ifmmode\else\unskip\hfill\fi\squaresymbol}
\title{ \LARGE 
\bf{Dynamic interventions with limited knowledge in network games}
}
\author{Mehran~Shakarami, Ashish~Cherukuri, Nima~Monshizadeh
\thanks{Mehran Shakarami, Ashish Cherukuri, and Nima Monshizadeh are with the Engineering and Technology Institute, University of Groningen, 9747AG, The Netherlands, {\tt\small m.shakarami@rug.nl, a.k.cherukuri@rug.nl, n.monshizadeh@rug.nl.}}%
}
\begin{document}

\maketitle
\begin{abstract}
This paper studies the problem of intervention design for steering the actions of noncooperative players in quadratic network games  to the social optimum.  The players choose their actions with the aim of maximizing their individual payoff functions, while a central regulator uses interventions to modify their marginal returns and maximize the social welfare function. This work builds on the key observation that the solution to the steering problem depends on the knowledge of the regulator on the players' parameters and the underlying network. We, therefore, consider different scenarios based on limited knowledge and propose suitable static, dynamic and adaptive intervention protocols. We formally prove convergence to the social optimum under the proposed mechanisms. We demonstrate our theoretical findings on a case study of Cournot competition with differentiated goods.
\end{abstract}

\section{Introduction}

Network games have emerged as a powerful tool for studying the scenarios where the well-being of individuals depends on their own decisions as well as the actions of their neighbors in an interaction network. These games have a broad spectrum of applications such as studying crime networks \cite{ballester2006s}, pricing in social networks \cite{candogan2012optimal,bloch2013pricing}, public good provision \cite{bramoulle2007public}, firm competition \cite{acemoglu2015networks}, and telecommunication \cite{altman2006survey}. We refer to \cite{parise2019variational} for a systematic analysis of the outcome of network games via the use of variational inequalities. In economics, the problem of influencing the outcome of network games by interventions has been of great interest, and this has led to various works typically studying the effects of the network topology on  optimal policies, see, e.g., \cite{ballester2006s,galeotti2020targeting,demange2017optimal,parise2021analysis}.

Generally speaking, noncooperative games involve players who are self-interested/selfish and pursue their own well-being. Such selfish behavior of the players entails degradation of performance in comparison to the scenarios where the players would cooperate to maximize the social welfare. The deterioration in performance has led to the definitions of two performance metrics termed the price of anarchy \cite{koutsoupias1999worst} and the price of stability \cite{anshelevich2008price}, and their quantification is extensively studied in different applications such as resource allocation \cite{johari2005efficiency}, congestion games \cite{christodoulou2005price,roughgarden2002bad}, and supply chains \cite{perakis2007price}.

An active line of research concerns improving the performance of noncooperative games
and realigning the preferences of the players with the social optimum through interventions. To this end, a central regulator provides incentives to coordinate the players and alter 
their strategies towards the social optimum. The main challenge, however,  
is that optimal incentives depend on private information of the players, generally unknown to the regulator \cite{hayek1945use}. The celebrated Vickrey–Clarke–Groves (VCG) mechanism \cite{vickrey1961counterspeculation} is adopted in different disciplines, and especially  in economics, to address this problem. In this 
setup, the  mechanism generates a payment rule with the aim of incentivizing the players to announce their private information to the regulator. This information is then  used to reach to the social optimum, see \cite{roughgarden2010algorithmic} for more details on the topic.

Another methodology for enhancing the performance and achieving the social optimum in noncooperative games is to  exploit  control-theoretic tools. In this case, the players do not report their private information, but their actions are observed over time by the regulator. The problem is then regarded as a feedback control problem where the desired outcome is the social optimum and the control effort is implemented through interventions \cite{alpcan2009nash}. Devising suitable control laws is straightforward when the regulator has \emph{perfect information} on the game and the payoffs of the players, whereas it becomes much more intricate when   some of the players' private information and/or network level parameters are unknown.
To overcome this lack of information,  dynamical protocols are proposed in  \cite{alpcan2009nash,alpcan2009control,barrera2014dynamic}. In \cite{alpcan2009nash}, a dynamic pricing mechanism is devised that solves the problem for players with separable utility functions. When the utility functions are non-separable, side information is used in \cite{alpcan2009control}  for convergence to the social optimum. In particular, the pricing mechanism employs the utility functions evaluated at the Nash equilibrium. In the context of congestion control, the mechanism presented in	\cite{barrera2014dynamic} guarantees convergence assuming that the network manager knows the aggregate flow on each link as well as the delay-cost experienced by the users.
These mechanisms are not generally applicable to network games since the players' payoff functions are non-separable and the information available to the regulator is limited.

In this work, we address the problem of steering the actions of noncooperative players in quadratic \textit{network games} to the solution of the social welfare maximization problem. We consider selfish players who maximize their individual payoff functions by following pseudo-gradient dynamics. The regulator, on the other hand, is aimed at nudging the players towards the social optimum, and to do this, she modifies the marginal returns of the players through interventions. Essential to our results is the observation that the choice of interventions structurally depend on the information available to the regulator. Therefore, we 
differentiate among notable cases concerning the knowledge available to the regulator: full game information, the network structure or an estimate of social optimum. Unavailability of such information gives rise to a fourth scenario where an adaptive control scheme is proposed to achieve the social optimum. We provide analytical convergence guarantees for all the proposed protocols, and accompany our findings with a numerical case study of Cournot competition.

The structure of the paper is as follows. Notations and preliminaries are provided in Section \ref{sec:prelim}. Section \ref{sec:problem} discusses the network game model and characterizes the optimization problem faced by the regulator. Section \ref{sec:intervention} includes the intervention protocols and presents their convergence guarantees to the social optimum. The case study is provided in Section \ref{sec:sim}, and  concluding remarks and future research directions are stated in Section \ref{sec-conclusion}. Existence of a unique social optimum and  boundedness analysis of the adaptive mechanism are presented in the appendix.

\section{Notations and preliminaries}\label{sec:prelim}
This section introduces notational conventions and provides a few  basic notions on convex analysis.

\subsection{Notations}
The set of real and nonnegative real numbers are denoted by $ \R $ and $ \R_{\geq 0} $, respectively. We denote the standard Euclidean norm by $\|\cdot \|$. The symbol $\bze$ denotes a vector/matrix of all zeros. 
For given vectors $ x_1,\cdots,x_m \in\R^n$, we use the notation $\col(x_i):=\big[x_1^\top ,\cdots,x_m^\top\big]^\top$. We use $P\succ \bze \,(\prec \bze)$ to denote that $P=P^\top \in\R^{n\times n}$ is positive definite (negative definite). Given a matrix $P=P^\top \in\R^{n\times n}$, we denote its Frobenius norm by $\|P\|_\mathrm{F}=\sqrt{\Tr(P^\top P)}$ where $\Tr(\,\cdot\,)$ is the trace operator. Moreover, the notation  $\lambda_i(P)$ with $i\in\{1,\dots,n\}$ denotes the eigenvalues of $P$, and $ \lambda_{\min}(P) $ and  $ \lambda_{\max}(P) $ are the minimum and the maximum eigenvalues of $ P $, respectively. The weighted Euclidean norm of a vector $ x\in\R^n $ is given by $ \|x\|_P=\sqrt{x^\top P x} $ where $ P\succ \bze $. A function $ F:\R^n\to \R^n $ is hypomonotone, monotone, and strongly monotone, respectively,  if it satisfies $ (x-y)^\top(F(x)-F(y))\geq \mu \|x-y\|^2 $ for all $ x,y\in \R^n $, with $ \mu\leq 0 $, $ \mu=0 $, and $ \mu>0 $, respectively. For a piecewise continuous function $x:[0,\infty)\to \R^n$, we  define the $\call_\infty$ and $\call_2$ norms as $\|x\|_\infty:=\sup_{t\geq 0}\|x(t)\|$ and $\|x\|_2:=\big(\int_0^\infty \|x(\tau)\|^2 d\tau \big)^{\frac{1}{2}}$, respectively. Moreover, we say $x\in\call_\infty$ when $\|x\|_\infty$ is finite, and $x\in\call_2$ when  $\|x\|_2$ is finite.

\subsection{Convex analysis}
Consider a nonempty, closed and convex set $\calx \subseteq \R^n$. We denote the projection of a point $ z\in \R^n $ on to the set $ \calx $ by 
$ \proj_\calx (z):=\argmin\nolimits_{y\in\calx}\|y-z\| $. Given a point $ x\in \calx $, the set  $ \caln_{\calx}(x):=\big\{y\in\R^n \mid y^\top(s-x)\leq 0,\forall s\in\calx\big\} $ is the normal cone to $ \calx $ at $ x $, and the tangent cone  is denoted by $ \calt_\calx(x):=\cl\left(\cup_{y\in\calx}\cup_{\lambda>0} \lambda (y-x)\right) $ where $ \cl(\cdot) $ is the closure. Given a point $ z\in\R^n $, we denote its projection on to $ \calt_\calx(x) $ by $\Pi_{\calx}(x,z):= \proj_{\calt_\calx(x)}(z) $.  It also follows from Moreau's decomposition theorem \cite[Thm. 3.2.5]{hiriart1996convex} that $ z=\proj_{\caln_\calx(x)}(z)+\proj_{\calt_\calx(x)}(z) $. Given the set $ \calx $ and a map $ F:\calx \to \R^n $, the variational inequality problem VI$ (\calx,F)$  consists of finding a point $ \bar x\in \calx $ such that $ (x-\bar x)^\top F(\bar x)\geq 0 $ for all $ x\in \calx $. We write $ \sol(\calx,F) $ to denote the set of solutions to  VI$ (\calx,F)$.

\section{Problem formulation}\label{sec:problem}
We consider a game with the population of $ \cali:=\{1,\dots,n\} $ players/agents that interact repeatedly with a central regulator as well as with each other according to an underlying interaction network. We denote the adjacency matrix of this network by $ P\in \R^{n\times n} $ where $ P_{ij}\in [0,1] $ denotes the influence of player $ j $'s strategy/action on the utility function of player $ i $. We assume that the network has no self loop, thus $ P_{ii}=0 $ for all $ i\in\cali $, and the set of neighbors of player $ i $ is denoted by $ \caln_i=\{j\in\cali\mid P_{ij}>0\} $. 

Each  player $ i\in\cali $ is associated with a payoff function $ U_i(x_i,z_i(x),u_i) $ that depends on her own action $ x_i\in \calx_i\subseteq \R $, the aggregate of her neighbors' actions
\begin{equation}\label{z-def}
z_i(x):=\sum_{j\in\caln_i}P_{ij}x_j
\end{equation}
with $ x=\col(x_i) $, and a scalar \emph{intervention} $ u_i$ which will be determined by the central regulator. We restrict our attention to linear quadratic payoff functions of the form
\begin{equation}\label{u-pay-def}
U_i\big(x_i,z_i(x),u_i\big)=W_i\big(x_i,z_i(x)\big)+x_iu_i
\end{equation}
with 
\begin{equation}\label{w-pay-def}
W_i\big(x_i,z_i(x)\big):=-\frac{1}{2}x_i^2+x_i\big(az_i(x)+b_i\big),
\end{equation}
where $ a\in\R\setminus\{0\} $ captures the impact of neighbors aggregate actions $ z_i(x) $ and $ b_i\in\R $ is the  standalone marginal return. 
The payoff function $ W_i $ is used in the literature to model peer effects in social and economic processes,  see e.g.  \cite{ballester2006s,bramoulle2014strategic,corbo2007importance}. The term $ x_iu_i $ is included to capture the intervention of the central regulator in modifying the standalone marginal return $ b_i $ to $ b_i+u_i $ \cite{galeotti2020targeting,parise2021analysis}.

In our setup, the interventions $ u=\col(u_i)$ take values from a set $\calu \subseteq \R^n$.
The action and intervention constraint sets satisfy the following assumption.

\begin{assumption}\label{asm:u-x-set}
The constraint sets $ \calx_i\subseteq \R $ and $ \calu\subseteq \R^n $ are nonempty, closed and convex.
Moreover, the set $ \calu $ contains the origin.
\oprocend
\end{assumption}

\begin{remark}\label{rem:x-u-sets}
We note that while the constraints on the action set are local, namely $x_i\in \calx_i$, the interventions constraint set $\mathcal{U}$ allows both local, e.g. $ \calu=\R_{\geq 0}^n $, and coupled constraints, e.g. $\calu = \{u\in\R^n \mid  \|u\|\leq c \}$ for some $ c>0 $.
Another notable example is given by $ \calu=\{u\in\R^n \mid u_i\in\R,\ \forall i\in\overline \cali\text{ and } u_i=0,\ \forall i\in\cali\setminus\overline\cali\} $ which can accommodate the case where the regulator applies the intervention to a subset of players only.  
\oprocend
\end{remark}

\noindent
\textit{Problem overview.} 
The players are noncooperative and merely interested in maximizing their individual payoff functions by choosing their actions. 
This selfish behavior causes loss of efficiency with respect to the situation in which the players would cooperate to maximize the total payoff. The central regulator, on the other hand, is aimed at coordinating the players and avoiding the efficiency loss. To this end, she changes the players' standalone marginal returns through suitable interventions.  

In the next two subsections, we discuss the dynamic model capturing the strategies of the players, and characterize the optimization problem faced by the regulator.
\subsection{Players' strategy}
Each player aims at maximizing her individual payoff function given the aggregated actions of her neighbors and the current value of the intervention signal. To capture this, 
we consider that the action of each player $ i\in\cali $ evolves over time according to the following pseudo-gradient  dynamics\footnote{See \cite{de2018distributed,gadjov2018passivity,shakarami2022distributed} for further applications of continuous pseudo-gradient dynamics in the context of distributed Nash equilibrium seeking for noncooperative games.}:
\[
\dot x_i(t)= \Pi_{\calx_i}\left(x_i(t), \frac{\partial U_i}{\partial x_i} \Big(x_i(t), z_i\big(x(t)\big), u_i(t)\Big)\right),
\]
where $u_i(t)$ is the intervention designed by the regulator. 
Noting the definition of $ z_i(x)$ given by \eqref{z-def} and the fact that $ P_{ii}=0 $, we can rewrite dynamics above as
\begin{equation}\label{x-indiv}
\dot{x}_i(t)=\Pi_{\calx_i}\Big(x_i(t),-x_i(t)+a\sum_{j\in\cali}P_{ij}x_j(t)+b_i+u_i(t)\Big).
\end{equation}

Note that in the case of no intervention, i.e., $ u_i(t)\equiv 0 $, the equilibrium of \eqref{x-indiv} coincides with the Nash equilibrium of the game, namely the action profile $x^\text{NE}=\col(x_i^\text{NE})$ satisfying
\begin{equation*}
x_i^\text{NE}  \in \argmax_{y_i\in\calx_i} W_i\big(y_i,z_i(x^\text{NE})\big),\quad \forall i\in\cali,
\end{equation*}
where $ W_i $ is given by \eqref{w-pay-def}. The Nash equilibrium $ x^\text{NE} $ 
can also be expressed as a solution of the variational inequality VI($\calx,F$) where $ \calx=\prod_{i\in\cali}\calx_i $ and $ F(x):=(I-aP)x-b $.\footnote{Existence of a Nash equilibrium follows from analogous arguments to the proof of  \cite[Cor. 4.2]{basar1999dynamic}, and the relation in \eqref{e:Nash-exp} is satisfied using \cite[Prop. 1.4.2]{facchinei2007finite}.}
That is,
\begin{equation}\label{e:Nash-exp}
x^\text{NE} \in \sol(\calx,F).
\end{equation}

Next we look at the problem from the regulator's side.

\subsection{Regulator's objective}
The central regulator aims to implement suitable interventions to coordinate the players and maximize the total payoff. More precisely, she aims at designing the intervention signal $ \col(u_i(t)) $  such that the actions of the players converge to a \emph{social optimum} $x_\mathrm{opt} $,  defined as a solution of the \emph{social welfare} maximization problem:
\begin{equation}\label{social}
x_\mathrm{opt}\in \argmax_{y\in\calx}\sum_{i\in\cali} W_i\big(y_i,z_i(y)\big),
\end{equation} 
where $ y=\col(y_i) $ and $ W_i $ is given by \eqref{w-pay-def}. Any social optimum $x_\mathrm{opt} $ is also a solution to  the following variational inequality problem \cite[Prop. 2.1.2]{bertsekas99nonlinear}:
\begin{equation}\label{x-opt-gen}
x_\mathrm{opt}\in\sol(\calx,H),
\end{equation}
where $ H(x)=\big(I-a(P+P^\top)\big)x-b $. 
Note that $-H$ is the gradient of the social welfare function $ \sum_{i \in \cali} W_i\big(x_i,z_i(x)\big) $.

Observe that $x_{\mathrm{opt}}$ differs from  the Nash equilibrium in \eqref{e:Nash-exp}. The regulator, therefore, aims to designing intervention mechanisms that solve the following problem:

\medskip{}
\noindent
\textit{Problem formulation.}
Design intervention mechanisms $ u\in\calu $  that asymptotically steer the action profile $x$ of the players in \eqref{x-indiv} 
to the social optimum $x_{\mathrm{opt}}$ given by \eqref{x-opt-gen}.
\medskip{}

\section{Intervention protocols}\label{sec:intervention}

Before proceeding with the intervention protocols, we discuss existence of a unique social optimum and comment on the feasibility of the formulated problem.

\begin{lemma}\label{lem:soci-uniq}
Let Assumption \ref{asm:u-x-set} hold. Then the social welfare maximization problem \eqref{social} has a unique solution  if 
\begin{equation}\label{e:iff-sc}
\max_{i\in\cali} a\,\lambda_i(P+P^\top)<1.     
\end{equation}

\end{lemma}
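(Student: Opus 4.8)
The plan is to recognize the social welfare maximization in \eqref{social} as the maximization of a single quadratic function whose concavity is governed by the spectrum of $P+P^\top$. First I would write the objective in matrix form. Using \eqref{w-pay-def} and the definition of $z_i$ in \eqref{z-def} together with $P_{ii}=0$, the social welfare function collapses to
\[
\Phi(x):=\sum_{i\in\cali}W_i\big(x_i,z_i(x)\big)=-\tfrac{1}{2}x^\top x+a\,x^\top P x+b^\top x,
\]
whose gradient is $\nabla\Phi(x)=-\big(I-a(P+P^\top)\big)x+b=-H(x)$, consistent with the map $H$ introduced before \eqref{x-opt-gen}. Writing $M:=I-a(P+P^\top)$, the Hessian is $\nabla^2\Phi=-M$.

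Next I would exploit the symmetry of $P+P^\top$. Since $P+P^\top$ is symmetric, its eigenvalues $\lambda_i(P+P^\top)$ are real and the eigenvalues of $M$ are exactly $1-a\lambda_i(P+P^\top)$. Hence condition \eqref{e:iff-sc} is equivalent to $\lambda_{\min}(M)>0$, i.e. $M\succ\bze$. This makes $\Phi$ strictly concave (its Hessian $-M\prec\bze$) and, equivalently, renders $H$ strongly monotone, since for all $x,y\in\R^n$ one has $(x-y)^\top\big(H(x)-H(y)\big)=(x-y)^\top M(x-y)\geq\lambda_{\min}(M)\|x-y\|^2$.

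I would then conclude in two steps. For uniqueness, strict concavity of $\Phi$ admits at most one maximizer over the nonempty closed convex set $\calx$ guaranteed by Assumption \ref{asm:u-x-set}; equivalently, by \eqref{x-opt-gen} and the fact that for a concave objective the first-order VI condition is both necessary and sufficient, the maximizers of $\Phi$ coincide with $\sol(\calx,H)$, and a strongly monotone VI over a nonempty closed convex set has at most one solution. For existence, I would invoke coercivity: since $M\succ\bze$, $\Phi(x)\to-\infty$ as $\|x\|\to\infty$, so the maximization of the continuous $\Phi$ over the closed set $\calx$ can be restricted to a compact sublevel set, yielding a maximizer by Weierstrass. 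Both facts are also available directly from the standard theory of strongly monotone variational inequalities, e.g. \cite[Thm. 2.3.3]{facchinei2007finite}.

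The computation is essentially routine; the only point requiring care is translating the eigenvalue condition \eqref{e:iff-sc} into $M\succ\bze$, noting that taking the maximum over all $a\lambda_i(P+P^\top)$ correctly captures $\lambda_{\max}\big(a(P+P^\top)\big)<1$ regardless of the sign of $a$. The existence part for an unbounded $\calx$ is the one place where concavity/coercivity, rather than mere continuity, is genuinely needed.
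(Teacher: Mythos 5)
Your proposal is correct and follows essentially the same route as the paper: translating \eqref{e:iff-sc} into $I-a(P+P^\top)\succ\bze$, hence strong concavity of the welfare function, which gives at most one maximizer over the closed convex set $\calx$, and then obtaining existence by restricting to a compact superlevel set and applying Weierstrass. The only quibble is terminological---for a maximization problem the relevant compact set is a \emph{superlevel} set of $\Phi$, not a sublevel set---but your argument clearly intends exactly that.
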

\begin{proof}
See the appendix.
\end{proof}

We note that the sufficient condition \eqref{e:iff-sc} is in general necessary if one looks at arbitrary constraint set $ \calx_i $ satisfying Assumption \ref{asm:u-x-set}. 
A notable example is given by $ \calx_i=\R $ for all $i\in\cali$; see \cite[Lem. II.1]{shakarami2021adaptive}.

Motivated by Lemma \ref{lem:soci-uniq}, we impose the following standing assumption throughout the paper.

\begin{assumption}\label{asm:opt}
The adjacency matrix $ P\in\R^{n\times n} $ and the parameter $ a\in\R $ satisfy $ \max_{i\in\cali} a\,\lambda_i(P+P^\top)<1 $.
\oprocend
\end{assumption}

\begin{remark}\label{rem:uniq-xopt}
The matrix $ P+P^\top $ is symmetric with the diagonal elements equal to zero. This implies that the matrix $ P+P^\top $ has only real eigenvalues and their sum is zero. Hence,
\begin{equation*}
\lambda_{\min}(P+P^\top)<0<\lambda_{\max}(P+P^\top).
\end{equation*}
It follows from the above inequalities  that  Assumption \ref{asm:opt} is satisfied if and only if either (i) $ a>0 $ and $ a\,\lambda_{\max}(P+P^\top)<1 $ or (ii) $ a<0 $ and $ a\,\lambda_{\min}(P+P^\top)<1 $. 
\oprocend
\end{remark}

As a consequence of Assumption \ref{asm:opt}, the  social welfare function on the right-hand side of \eqref{social} is strongly concave and
thus admits a unique maximizer that is also the  solution of \eqref{x-opt-gen}, namely
\begin{equation}\label{x-opt}
x_\mathrm{opt}=\sol(\calx,H),
\end{equation}
with $ H(x)=\big(I-a(P+P^\top)\big)x-b $.

Having established the uniqueness of the social optimum $x_\mathrm{opt}$, we shift our attention to the feasibility of the problem formulated at the end of the previous section.

Noting \eqref{x-indiv}, we recall that the action profile evolves according to the following projected pseudo-gradient dynamics:
\begin{equation}\label{ne-alg}
\dot{x}(t)=\Pi_{\calx}\big(x(t),-F\big(x(t)\big)+u(t)\big),
\end{equation}
where $u\in \mathcal{U}$ and
\begin{equation}\label{e:f-map}
F(x)=(I-aP)x-b.
\end{equation}
The dynamics \eqref{ne-alg} at steady-state reads as $
\bze=\Pi_{\calx}\big(\bar x,-F( \bar x)+\bar u\big)
$
for constant action-intervention pairs $(\bar x, \bar u)\in \calx \times \calu$. We thus deduce from Moreau’s decomposition theorem that $\bze = -F( \bar x)+\bar u-\proj_{\caln_{\calx}(\bar x)}\big(-F( \bar x)+\bar u\big)$, or equivalently $-F( \bar x)+\bar u\in \caln_{\calx}(\bar x)$.
By \cite[Ex. 6.13]{rockafellar2009variational}, the pair $(\bar x, \bar u)$ satisfies the latter inclusion only if $\bar x$ belongs to the set below:
\footnote{Note that in \eqref{feas-set}, the variational inequality problem VI$\big(\calx,F-\bar u\big)$ has a  unique solution since $F$ is strongly monotone (see \eqref{F-strong-monotone} and \cite[Thm. 2.3.3]{facchinei2007finite}).} 
\begin{equation}\label{feas-set}
\cals :=\Big\{\bar x\in\calx \mid \exists \bar u\in\calu \;{\text{ such that}}\; \bar x=\sol\big(\calx,F-\bar u\big) \Big\}.
\end{equation}
The set $\mathcal{S}$ contains all assignable equilibria (action profile) of \eqref{ne-alg}, which 
necessitates the following assumption on $x_\mathrm{opt}$.
\begin{assumption}\label{asm:xopt_omega}
The social optimum $ x_\mathrm{opt}$ given by \eqref{x-opt} belongs to the set $\cals$ in \eqref{feas-set}.
\oprocend
\end{assumption}

\noindent\textit{The role of limited knowledge.} In what follows, we provide several intervention protocols that are able to steer the action profile towards the social welfare $x_{\mathrm{opt}}$. Key to our results is the observation that the suitable intervention depends on the knowledge of the regulator on the underlying game parameters.

We emphasize that Assumptions \ref{asm:u-x-set}, \ref{asm:opt}, and \ref{asm:xopt_omega} are assumed to hold throughout this section.

\subsection{Static open-loop intervention}
The first case that we consider is where the regulator has full access to the game information, i.e.,  $ (aP,b) $ and $ \calx_i $'s. The regulator, therefore, can use this knowledge to compute  $ x_\mathrm{opt} $ and its corresponding intervention $ u_\mathrm{opt}\in\calu $, with $\sol(\calx,F-u_{\mathrm{opt}}) = x_{\mathrm{opt}}$. Note that such $ u_\mathrm{opt}$ exists by  Assumption \ref{asm:xopt_omega}.
The regulator can then implement the protocol $ u(t)\equiv u_\mathrm{opt} $ to steer the action profile to $ x_\mathrm{opt} $. 
This is formalized in the following proposition.

\begin{proposition}\label{prop:open_loop}
Consider the pseudo-gradient dynamics \eqref{ne-alg}. 
Let $u_{\mathrm{opt}}\in \calu$ be such that $ \sol(\calx,F-u_{\mathrm{opt}}) = x_{\mathrm{opt}}$.
Then, for any initial condition $ x(0)\in\calx $, the  \emph{static open-loop} intervention $u(t) \equiv u_{\mathrm{opt}}$ 
steers the action profile $ x(t) $ to the social optimum $ x_\mathrm{opt} $. Moreover, $u_{\mathrm{opt}}$ satisfies
\begin{equation}\label{u-static}
u_{\mathrm{opt}} = 	(I-aP) x_\mathrm{opt}-b+v,
\end{equation}
for some $ v\in\caln_{\calx}(x_\mathrm{opt})$.
\end{proposition}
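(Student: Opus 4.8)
The plan is to treat the two assertions separately. For the convergence claim I would use the Lyapunov candidate $V(x)=\tfrac{1}{2}\|x-x_{\mathrm{opt}}\|^2$ and differentiate it along \eqref{ne-alg} with $u(t)\equiv u_{\mathrm{opt}}$. The first step is a projection inequality: abbreviating $w:=-F(x)+u_{\mathrm{opt}}$, Moreau's decomposition theorem gives $w-\Pi_{\calx}(x,w)=\proj_{\caln_{\calx}(x)}(w)\in\caln_{\calx}(x)$, and the definition of the normal cone then yields $(\Pi_{\calx}(x,w))^{\top}(y-x)\geq w^{\top}(y-x)$ for every $y\in\calx$. Evaluating this at $y=x_{\mathrm{opt}}$ and using forward invariance of $\calx$ (so that $x(t)\in\calx$ along the trajectory) produces $\dot V\leq(-F(x)+u_{\mathrm{opt}})^{\top}(x-x_{\mathrm{opt}})$.

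Next I would split this bound by inserting $\pm F(x_{\mathrm{opt}})$, writing the right-hand side as $-(F(x)-F(x_{\mathrm{opt}}))^{\top}(x-x_{\mathrm{opt}})+(-F(x_{\mathrm{opt}})+u_{\mathrm{opt}})^{\top}(x-x_{\mathrm{opt}})$. Since $x_{\mathrm{opt}}=\sol(\calx,F-u_{\mathrm{opt}})$, the variational inequality reads $(s-x_{\mathrm{opt}})^{\top}(F(x_{\mathrm{opt}})-u_{\mathrm{opt}})\geq 0$ for all $s\in\calx$, i.e. $-F(x_{\mathrm{opt}})+u_{\mathrm{opt}}\in\caln_{\calx}(x_{\mathrm{opt}})$, so the second summand is nonpositive for every $x\in\calx$. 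For the first summand I invoke strong monotonicity of $F$: Assumption \ref{asm:opt} forces the symmetric part of $I-aP$, namely $I-\tfrac{a}{2}(P+P^{\top})$, to have eigenvalues $1-\tfrac{1}{2}a\lambda_i(P+P^{\top})>\tfrac{1}{2}>0$, so $F$ is strongly monotone with some modulus $\mu>0$ (cf. \eqref{F-strong-monotone}) and the first summand is bounded above by $-\mu\|x-x_{\mathrm{opt}}\|^2$. Combining the two bounds gives $\dot V\leq -2\mu V$, whence $x(t)\to x_{\mathrm{opt}}$ asymptotically (in fact exponentially) from any $x(0)\in\calx$.

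The characterization \eqref{u-static} then follows by unwinding the same variational inequality: $-F(x_{\mathrm{opt}})+u_{\mathrm{opt}}\in\caln_{\calx}(x_{\mathrm{opt}})$ means $v:=u_{\mathrm{opt}}-F(x_{\mathrm{opt}})\in\caln_{\calx}(x_{\mathrm{opt}})$, and substituting $F(x_{\mathrm{opt}})=(I-aP)x_{\mathrm{opt}}-b$ from \eqref{e:f-map} yields exactly $u_{\mathrm{opt}}=(I-aP)x_{\mathrm{opt}}-b+v$. The main obstacle I anticipate lies not in the algebra but in the analytic groundwork beneath the Lyapunov step: one must justify the projection inequality cleanly and establish well-posedness of the projected dynamical system — forward invariance of $\calx$ and existence of Carath\'eodory solutions — so that $V$ is differentiable along trajectories. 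Once strong monotonicity of $F$ and the normal-cone membership of $-F(x_{\mathrm{opt}})+u_{\mathrm{opt}}$ are in hand, the two terms of $\dot V$ combine with opposite signs and the conclusion is immediate.
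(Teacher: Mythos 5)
Your proposal is correct and follows essentially the same route as the paper's proof: the same Lyapunov candidate $V=\tfrac{1}{2}\|x-x_{\mathrm{opt}}\|^2$, the same use of Moreau's decomposition to discard the normal-cone component, the same split into a strong-monotonicity term (via \eqref{F-strong-monotone}) and a term controlled by $v\in\caln_{\calx}(x_{\mathrm{opt}})$, and the same derivation of \eqref{u-static} from the variational inequality. The well-posedness issue you flag is exactly what the paper settles by invoking \cite[Thm.~1]{brogliato2006equivalence} for the strongly monotone map $T(x)=F(x)-F(x_{\mathrm{opt}})-v$.
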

\begin{proof}
We first use the relation $x_{\mathrm{opt}} = \sol(\calx,F-u_{\mathrm{opt}})$  to show that $u_\mathrm{opt}$ admits the form \eqref{u-static}. To see this, note that 
\begin{equation*}
(y-x_\mathrm{opt})^\top\big( F(x_\mathrm{opt})-u_\mathrm{opt} \big)\geq 0,\quad \forall y\in\calx.
\end{equation*}
This implies that $v:= -F(x_\mathrm{opt})+u_\mathrm{opt}\in\caln_{\calx}(x_\mathrm{opt})  $.
The latter yields $u_\mathrm{opt}=F(x_\mathrm{opt})+v $, which together with \eqref{e:f-map} establishes \eqref{u-static}. 

Next we prove that the dynamics  \eqref{ne-alg} under the input \eqref{u-static}, has a unique solution $ x(t) $ that convergences to the social optimum. In this regard, we rewrite the overall dynamics as follows:
\begin{equation*}
\dot{x}=\Pi_{\calx}\big(x,-T(x)\big),
\end{equation*}
where $ T(x):=F(x)-F(x_\mathrm{opt})-v $. We note that for the mapping $ F $, the following holds:
\begin{equation}\label{F-strong-monotone}
\begin{split}
	(x-y)^\top(F(x)-F(y))&=\|x-y\|_{(I-\frac{1}{2}a(P+P^\top))}^2\\
	&\geq\frac{1}{2}\|x-y\|^2,\quad \forall x,y\in\R^n,
\end{split}
\end{equation}
where we have used Assumption~\ref{asm:opt} to obtain the inequality. This means that $F$ is strongly monotone, and in turn, the mapping $ T $ is also strongly monotone. In addition, the set $ \calx $ is closed and convex.
It then follows from \cite[Thm. 1]{brogliato2006equivalence} that, for any initial condition $ x(0)\in\calx $, the above dynamics has a unique solution $ x(t) $ for all $ t\geq 0 $.\footnote{A map $x:[0,\infty)\to \calx$ is a (Carath\'{e}odory) solution of the projected dynamical system $ \dot{x}=\Pi_{\calx}\big(x,-T(x)\big) $ if it is absolutely continuous and satisfies  $\dot x(t)=\Pi_\calx \big(x(t),-T(x(t))\big)$ for almost all $ t \geq 0$.}

Next consider the Lyapunov candidate $ V(x)=\frac{1}{2}\|\tilde x\|^2 $ with $ \tilde x=x-x_\mathrm{opt} $. The time-derivative of the evolution of $ V $ along the solution of the system satisfies
\begin{align*}
\nabla V(x)^\top \Pi_{\calx}\big(x,-T(x)\big)=&-\tilde x^\top T(x)\\
&-\tilde x^\top \proj_{\caln_{\calx}(x)}\big(-T(x)\big),
\end{align*}
where we used Moreau's decomposition theorem. Note that $ -\tilde x^\top \proj_{\caln_{\calx}(x)}\big(-T(x)\big)\leq 0 $ as $ x,x_\mathrm{opt}\in\calx $. It then follows from the definition of $ T$ that
\begin{align*}
\nabla V(x)^\top \Pi_{\calx}\big(x,-T(x)\big)\leq-\tilde x^\top \big(F(x)-F(x_\mathrm{opt})\big)+\tilde x^\top v.
\end{align*}
Recalling that $ v\in\caln_{\calx}(x_\mathrm{opt}) $, we have $ \tilde x^\top v\leq 0 $, and in turn, we obtain
\begin{equation*}
\nabla V(x)^\top \Pi_{\calx}\big(x,-T(x)\big)\leq -\frac{1}{2}\|\tilde x\|^2,
\end{equation*}
where we have used \eqref{F-strong-monotone}. The above inequality implies that $ V $ decreases monotonically along the solution of the closed-loop dynamics and the action profile $ x(t) $ converges to $ x_\mathrm{opt} $.
\end{proof}

\subsection{Static feedback intervention}
We next consider the case where the regulator has only access to $ aP $, but neither $b$ nor $ \calx_i $'s. This means that the regulator has complete knowledge about the network topology and the impact of the actions of the players on each other. Leveraging this information, we show that under a weak coupling condition, the regulator can steer the players to the social optimum by employing a static state feedback protocol.

\begin{proposition}\label{prop:static-fb}
Consider  the pseudo-gradient dynamics \eqref{ne-alg}. 	Assume that $ aP^\top x_\mathrm{opt}\in\calu $ and 
\begin{equation}\label{ap-weak-coupling}
\|aP\|<\frac{1}{2}.
\end{equation}
Then, for any initial condition $ x(0)\in\calx $, the  \emph{static feedback} intervention 
\begin{equation}\label{u-static-fb}
u(t)=\proj_\calu\big(aP^\top x(t)\big),
\end{equation}
steers the action profile $ x(t) $ to the social optimum $ x_\mathrm{opt} $.
\end{proposition}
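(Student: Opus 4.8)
The plan is to mirror the structure used in the proof of Proposition~\ref{prop:open_loop}: rewrite the closed loop as a projected dynamical system driven by a strongly monotone map, establish well-posedness, and then exhibit a strict Lyapunov function that vanishes exactly at $x_\mathrm{opt}$. The new ingredient relative to the open-loop case is the state-dependent feedback term $\proj_\calu(aP^\top x)$, and essentially all of the work goes into controlling its effect through the weak coupling bound \eqref{ap-weak-coupling}.

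First I would verify that $x_\mathrm{opt}$ is an equilibrium of \eqref{ne-alg} under \eqref{u-static-fb}. Substituting $x=x_\mathrm{opt}$ and invoking the hypothesis $aP^\top x_\mathrm{opt}\in\calu$, the projection is inactive, so $u=aP^\top x_\mathrm{opt}$ and the field inside the tangent-cone projection reduces to $-F(x_\mathrm{opt})+aP^\top x_\mathrm{opt}=-\big(I-a(P+P^\top)\big)x_\mathrm{opt}+b=-H(x_\mathrm{opt})$. Since $x_\mathrm{opt}=\sol(\calx,H)$ by \eqref{x-opt}, the variational inequality yields $-H(x_\mathrm{opt})\in\caln_\calx(x_\mathrm{opt})$, so its projection onto $\calt_\calx(x_\mathrm{opt})$ is zero and $\dot x=\bze$ there. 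This is the crux of the mechanism: the feedback $aP^\top x$ is precisely what converts the $F$-based steady-state condition into the $H$-based optimality condition.

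Next I would recast the closed loop as $\dot x=\Pi_\calx\big(x,-T(x)\big)$ with $T(x):=F(x)-\proj_\calu(aP^\top x)$ and argue that $T$ is strongly monotone. Combining the strong monotonicity of $F$ from \eqref{F-strong-monotone}, the nonexpansiveness of $\proj_\calu$, the identity $\|aP^\top\|=\|aP\|$, and Cauchy--Schwarz, I expect
\[
(x-y)^\top\big(T(x)-T(y)\big)\geq\Big(\tfrac12-\|aP\|\Big)\|x-y\|^2,
\]
which is positive by \eqref{ap-weak-coupling}. Strong monotonicity together with Lipschitzness of $T$ (projections being Lipschitz) then gives existence and uniqueness of a Carath\'{e}odory solution from any $x(0)\in\calx$ via \cite[Thm. 1]{brogliato2006equivalence}.

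Finally, with $V(x)=\tfrac12\|\tilde x\|^2$ and $\tilde x=x-x_\mathrm{opt}$, I would differentiate along the solution. Moreau's decomposition discards the normal-cone contribution exactly as in Proposition~\ref{prop:open_loop} (the term $-\tilde x^\top\proj_{\caln_\calx(x)}(\cdot)\leq0$ since $x,x_\mathrm{opt}\in\calx$), leaving $\dot V\leq-\tilde x^\top T(x)$. Splitting $-\tilde x^\top T(x)=-\tilde x^\top\big(T(x)-T(x_\mathrm{opt})\big)-\tilde x^\top T(x_\mathrm{opt})$, the first term is bounded by $-\big(\tfrac12-\|aP\|\big)\|\tilde x\|^2$ by the strong monotonicity above, while $T(x_\mathrm{opt})=H(x_\mathrm{opt})$ together with $-H(x_\mathrm{opt})\in\caln_\calx(x_\mathrm{opt})$ forces $-\tilde x^\top T(x_\mathrm{opt})\leq0$. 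Hence $\dot V\leq-\big(\tfrac12-\|aP\|\big)\|\tilde x\|^2$, a strict decrease, and $x(t)\to x_\mathrm{opt}$. I expect the main obstacle to be the bookkeeping between the two maps: the dynamics and feedback live on $F$ (and $P$) whereas optimality lives on $H$ (and $P+P^\top$), and one must track how the single term $aP^\top x$ bridges them while verifying that the nonexpansiveness bound on $\proj_\calu$ produces exactly the $\|aP\|$ that the weak coupling condition dominates.
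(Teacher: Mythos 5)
Your proposal is correct and follows essentially the same route as the paper's proof: the same map $T(x)=F(x)-\proj_\calu(aP^\top x)$, well-posedness via \cite[Thm. 1]{brogliato2006equivalence}, and the Lyapunov function $\tfrac12\|\tilde x\|^2$ with the normal-cone term discarded by Moreau's decomposition; your split $-\tilde x^\top T(x)=-\tilde x^\top\big(T(x)-T(x_\mathrm{opt})\big)-\tilde x^\top T(x_\mathrm{opt})$ is algebraically the same step as the paper's addition of the inequality $\tilde x^\top H(x_\mathrm{opt})\ge 0$. The only cosmetic difference is the monotonicity constant ($\tfrac12-\|aP\|$ from Assumption \ref{asm:opt} versus the paper's $1-2\|aP\|$ from bounding $\|\tfrac12 a(P+P^\top)\|\le\|aP\|$), and both are positive under \eqref{ap-weak-coupling}.
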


\begin{proof}
The closed-loop dynamics of \eqref{ne-alg} and \eqref{u-static-fb} is
\begin{equation}\label{e:cl-static-fb}
\dot{x}=\Pi_{\calx}\big(x,-T(x)\big),
\end{equation}
with $ T(x)=F(x)-\proj_\calu(aP^\top x) $.	For the map $ T $, we have
\begin{multline}\label{e:t-hypmon-def}
(x-y)^\top\big(T(x)-T(y)\big)=\|x-y\|_{(I-\frac{1}{2}a(P+P^\top))}^2\\
-(x-y)^\top \big(\proj_\calu(aP^\top x)-\proj_\calu(aP^\top y)\big),
\end{multline}
for all $ x,y\in\R^n $. Note that the projection operator is nonexpansive \cite[Prop. 2.1.3]{bertsekas99nonlinear}, we thus deduce that the second term on the right-hand side of the above relation satisfies 
\begin{equation*}
(x-y)^\top \big(\proj_\calu(aP^\top x)-\proj_\calu(aP^\top y)\big)\leq \|aP\|\|x-y\|^2.
\end{equation*}
As a result, it follows from \eqref{e:t-hypmon-def} that
\begin{equation}\label{e:t-hypmon-final}
(x-y)^\top\big(T(x)-T(y)\big)\geq (1-2\|aP\|)\|x-y\|^2,\quad \forall x,y\in\R^n.
\end{equation}
This means that $ T $ is hypomonotone, hence the dynamics \eqref{e:cl-static-fb} has a unique solution $ x(t) $ for all $ t\geq 0 $ \cite[Thm. 1]{brogliato2006equivalence}.

Let $ \tilde x:=x-x_\mathrm{opt} $, and consider the Lyapunov candidate $ V(\tilde x)=\frac{1}{2}\|\tilde{x}\|^2 $. The time-derivative of the evolution of $ V $ along the solution of \eqref{e:cl-static-fb} satisfies
\begin{multline*}
\nabla V(\tilde{x})^\top \Pi_{\calx}\big(x,-T(x)\big)=-\tilde{x}^\top T(x)\\
- \tilde{x}^\top\proj_{\caln_{\calx}(x)}\big(-T(x)\big),
\end{multline*}
where we have used Moreau’s decomposition theorem. Recall that $ - \tilde{x}^\top\proj_{\caln_{\calx}(x)}\big(-T(x)\big)\leq 0 $ since $ x,x_\mathrm{opt}\in\calx $. We therefore have
\begin{align}\label{e:v-dot-T}
\nabla V(\tilde{x})^\top \Pi_{\calx}\big(x,-T(x)\big)\leq -\tilde{x}^\top T(x).
\end{align}
Note from \eqref{x-opt} that $ (y-x_\mathrm{opt})^\top H(x_\mathrm{opt})\geq 0$ for all $  y\in\calx. $
Adding the left-hand side of the this inequality evaluated at $ y=x $ to the right-hand side of \eqref{e:v-dot-T} yields
\begin{multline}\label{e:v-dot-state-fb}
\nabla V(\tilde{x})^\top \Pi_{\calx}\big(x,-T(x)\big)\leq  -\|\tilde{x}\|_{(I-\frac{1}{2}a(P+P^\top))}^2\\
+\tilde{x}^\top \big(\proj_\calu(aP^\top x)-aP^\top x_\mathrm{opt}\big),
\end{multline}
where the definitions of $ T $ and $ H $ are used. The relation $ aP^\top x_\mathrm{opt}\in\calu $ implies that $ aP^\top x_\mathrm{opt}=\proj_\calu(aP^\top x_\mathrm{opt}) $. This together with \eqref{e:t-hypmon-def} evaluated at $y=x_\mathrm{opt}$ means that the right-hand side of \eqref{e:v-dot-state-fb} is equal to $ -\tilde{x}^\top(T(x)-T(x_\mathrm{opt})) $.
We therefore deduce from  \eqref{e:t-hypmon-final} that
\begin{equation*}
\nabla V(\tilde{x})^\top \Pi_{\calx}\big(x,-T(x)\big)\leq -(1-2\|aP\|)\|\tilde{x}\|^2.
\end{equation*}
It then follows from \eqref{ap-weak-coupling} that $ V $ decreases monotonically along the solution of the closed-loop dynamics and the action profile $ x(t) $ converges to $ x_\mathrm{opt} $.
\end{proof}

Based on Proposition \ref{prop:static-fb}, the static feedback intervention \eqref{u-static-fb} steers the actions of the players to the social optimum under the condition \eqref{ap-weak-coupling}. 
Interestingly, this condition can be dropped in the case where the constraint set $ \calu $ is sufficiently ``large'', namely if $ aP^\top \bar x\in  \calu $ for all $\bar x\in \calx$; a trivial example is given by $\calu=\R^n$.
The following corollary summarizes this argument.
\begin{corollary}\label{cor:state-fb}
Consider  the pseudo-gradient dynamics \eqref{ne-alg}, and	assume for all $ \bar x\in\calx $, we have $ aP^\top \bar x\in  \calu $.
Then, for any initial condition $ x(0)\in\calx $, the  static feedback intervention 
\begin{equation}\label{u-static-fb-no-const}
u(t)=a P^\top x(t),
\end{equation}
steers the action profile $ x(t) $ to the social optimum $ x_\mathrm{opt} $.
\end{corollary}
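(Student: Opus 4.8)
The plan is to reduce the statement to the analysis already carried out for Proposition~\ref{prop:static-fb}, exploiting the fact that the hypothesis $ aP^\top \bar x\in\calu $ for all $ \bar x\in\calx $ renders the projection onto $ \calu $ in \eqref{u-static-fb} vacuous. First I would substitute the feedback \eqref{u-static-fb-no-const} into \eqref{ne-alg}; since $ aP^\top x(t)\in\calu $ whenever $ x(t)\in\calx $, the intervention is admissible without any projection, and the closed-loop system reads $ \dot x=\Pi_{\calx}\big(x,-T(x)\big) $ with $ T(x)=F(x)-aP^\top x $.

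The key observation is the algebraic identity, obtained from \eqref{e:f-map} and the definition of $ H $ in \eqref{x-opt},
\begin{equation*}
T(x)=F(x)-aP^\top x=\big(I-a(P+P^\top)\big)x-b=H(x).
\end{equation*}
In other words, the unconstrained feedback exactly cancels the asymmetry between $ F $ and $ H $, so that the closed loop coincides with the projected dynamics associated with VI$ (\calx,H) $, whose unique solution is $ x_\mathrm{opt} $. Crucially, Assumption~\ref{asm:opt} ensures $ I-a(P+P^\top)\succ\bze $, hence $ H $ is strongly monotone with modulus $ \mu:=1-\max_{i\in\cali}a\,\lambda_i(P+P^\top)>0 $. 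This is precisely what allows the weak coupling condition \eqref{ap-weak-coupling} of Proposition~\ref{prop:static-fb} to be dropped: there $ T $ was only hypomonotone, whereas here $ T=H $ is strongly monotone regardless of $ \|aP\| $.

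With strong monotonicity of $ T $ in hand, I would invoke \cite[Thm.~1]{brogliato2006equivalence} to conclude that the closed-loop dynamics admits a unique solution $ x(t) $ for all $ t\geq 0 $ from any $ x(0)\in\calx $. For convergence I would mirror the Lyapunov argument of Proposition~\ref{prop:static-fb}: take $ V(\tilde x)=\tfrac12\|\tilde x\|^2 $ with $ \tilde x=x-x_\mathrm{opt} $, use Moreau's decomposition to split $ \Pi_{\calx}\big(x,-T(x)\big) $, and discard the normal-cone contribution via $ -\tilde x^\top\proj_{\caln_{\calx}(x)}\big(-T(x)\big)\leq 0 $, which holds since $ x,x_\mathrm{opt}\in\calx $. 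This leaves $ \nabla V^\top\dot x\leq -\tilde x^\top H(x) $; adding the variational inequality $ (x-x_\mathrm{opt})^\top H(x_\mathrm{opt})\geq 0 $ from \eqref{x-opt} to the right-hand side yields $ \nabla V^\top\dot x\leq -\tilde x^\top\big(H(x)-H(x_\mathrm{opt})\big)\leq -\mu\|\tilde x\|^2 $ by strong monotonicity. Hence $ V $ decreases monotonically and $ x(t)\to x_\mathrm{opt} $.

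Since $ T=H $ is strongly monotone, the argument is essentially a simplification of the proof of Proposition~\ref{prop:static-fb} and I do not anticipate a genuine obstacle. The only point requiring care is verifying that $ aP^\top x(t) $ remains in $ \calu $ along trajectories so that \eqref{u-static-fb-no-const} is a legitimate element of the admissible intervention set---this is exactly what the corollary's hypothesis guarantees, and it is also what makes $ x_\mathrm{opt} $ a feasible equilibrium in the sense of Assumption~\ref{asm:xopt_omega}.
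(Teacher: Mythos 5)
Your proposal is correct and follows essentially the same route as the paper: the hypothesis makes the projection onto $\calu$ vacuous, existence and uniqueness of solutions carries over from the analysis of Proposition~\ref{prop:static-fb}, and the same Lyapunov function $V(\tilde x)=\tfrac12\|\tilde x\|^2$ with Moreau's decomposition and the variational inequality at $x_\mathrm{opt}$ yields $\dot V\leq -\|\tilde x\|_{(I-a(P+P^\top))}^2$, which is negative definite by Assumption~\ref{asm:opt}. Your explicit identification $T=H$ is a nice way of phrasing why the weak coupling condition can be dropped, but it is only a repackaging of the same estimate the paper obtains.
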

\begin{proof}
Note that the state feedback intervention \eqref{u-static-fb} is equivalent to \eqref{u-static-fb-no-const} as $ aP^\top x\in  \calu $, that is $ \proj_\calu(aP^\top x)=a P^\top x $. We therefore deduce from the proof of Proposition \ref{prop:static-fb} that the closed-loop system has a unique solution $ x(t) $ for all $ t\geq 0 $. Moreover, given the Lyapunov candidate $ V(\tilde x)=\frac{1}{2}\|\tilde{x}\|^2 $ with $ \tilde x=x-x_\mathrm{opt} $, its time-derivative along $ x(t) $ satisfies \eqref{e:v-dot-state-fb}. Next we use $ \proj_\calu(aP^\top x)=a P^\top x $ and rewrite \eqref{e:v-dot-state-fb} as follows:
\begin{equation*}
\nabla V(\tilde{x})^\top \Pi_{\calx}\big(x,-T(x)\big)\leq  -\|\tilde{x}\|_{(I-a(P+P^\top))}^2.
\end{equation*}
We conclude from  $ I-a(P+P^\top)\succ \bze $ (cf. Assumption \ref{asm:opt}) that $ V $ decreases monotonically along the solution of the closed-loop dynamics and $ x(t) $ converges to $ x_\mathrm{opt} $.
\end{proof}

\begin{remark}\label{rem:potential}
It is worth mentioning that modifying the standalone marginal returns in \eqref{u-pay-def} by setting $u_i=a\sum_{j\in\cali}P_{ji}x_j$, transforms 
the network game into a ``potential game'' \footnote{A game $ G=(\cali, (U_i)_{i\in\cali}, (\calx_i)_{i\in\cali}) $ is an (exact) potential game if there exists a potential function $ \Phi:\calx\to \R $ such that $ U_i(x_i,x_{-i})- U_i(y_i,x_{-i})=\Phi(x_i,x_{-i})- \Phi(y_i,x_{-i})$ for all $ x_i,y_i\in\calx_i $, $ x_{-i}\in\prod_{j\neq i}\calx_j $, and $ i\in\cali $ \cite{monderer1996potential}.} with the potential function being the social welfare, namely   $ \sum_{i \in \cali} W_i\big(x_i,z_i(x)\big) $. 	
In fact, bearing in mind that $P_{ji}$ reflects the influence of player $i$ on player $j$, the aforementioned modification balances the game such that the mutual effects between any pair of players become identical. The protocol \eqref{u-static-fb-no-const} provides a dynamic counterpart of this marginal returns modification.
\oprocend
\end{remark}

\subsection{Dynamic intervention with estimated social optimum}
Next we consider the scenario where the regulator  is not aware of the game information $(aP, b)$ and $\calx_i$'s, but instead has a reliable estimate of the social optimum $x_\mathrm{opt}$, namely $ x_s\in\cals $. In this case, the regulator can resort to an integral control-based intervention to obtain convergence of the action profile to $ x_s$. We present such intervention and its convergence guarantees in the following proposition:

\begin{proposition}\label{prop:dynamic}
Consider the pseudo-gradient dynamics \eqref{ne-alg}.
Let $x_s \in \cals$ and consider  
the  \emph{dynamic} intervention 
\begin{equation}\label{u-dynamic}
\dot{u}(t)=\Pi_\calu\big(u(t),x_s-x(t)\big).
\end{equation}
Then, for any initial condition $ x(0)\in\calx $,
the above intervention protocol steers the action profile $ x(t) $ to the point $ x_s $.
\end{proposition}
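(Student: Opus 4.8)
The plan is to treat \eqref{ne-alg}--\eqref{u-dynamic} as a single projected dynamical system $\dot\xi=\Pi_{\calx\times\calu}\big(\xi,-T(\xi)\big)$ on the closed convex set $\calx\times\calu$, with $\xi=\col(x,u)$ and $T(x,u):=\col\big(F(x)-u,\,x-x_s\big)$, matching the convention used in the proof of Proposition~\ref{prop:open_loop}, and then to exhibit a Lyapunov function certifying convergence of the $x$-component to $x_s$. First I would fix a reference intervention: since $x_s\in\cals$, there exists $u_s\in\calu$ with $x_s=\sol(\calx,F-u_s)$, so that $v_s:=-F(x_s)+u_s\in\caln_{\calx}(x_s)$, and $(x_s,u_s)$ is an equilibrium of the coupled dynamics. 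A direct computation shows that $T$ is monotone: the skew coupling between the $u$ and $x$ blocks cancels, leaving only the term $(x_1-x_2)^\top(F(x_1)-F(x_2))$, which is nonnegative by \eqref{F-strong-monotone}. Since $T$ is affine, hence Lipschitz, \cite[Thm. 1]{brogliato2006equivalence} yields a unique Carath\'{e}odory solution for any $x(0)\in\calx$ and $u(0)\in\calu$.

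For the convergence argument I would introduce $\tilde x:=x-x_s$ and $\tilde u:=u-u_s$ and take the Lyapunov candidate $V=\tfrac12\|\tilde x\|^2+\tfrac12\|\tilde u\|^2$. Differentiating along the trajectories and applying Moreau's decomposition theorem to both projections gives
\begin{equation*}
\dot V\leq \tilde x^\top\big(-F(x)+u\big)-\tilde u^\top\tilde x,
\end{equation*}
where the normal-cone contributions $-\tilde x^\top\proj_{\caln_{\calx}(x)}(\cdot)\le0$ and $-\tilde u^\top\proj_{\caln_{\calu}(u)}(\cdot)\le0$ are discarded using $x,x_s\in\calx$ and $u,u_s\in\calu$, respectively. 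The decisive step is the cancellation of the cross terms: substituting $u=u_s+\tilde u$ makes $\tilde x^\top\tilde u-\tilde u^\top\tilde x$ vanish, so that $\dot V\leq -\tilde x^\top\big(F(x)-u_s\big)$.

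It then remains to sign this bound. Using $u_s=F(x_s)+v_s$ I would rewrite it as $\dot V\leq -\tilde x^\top\big(F(x)-F(x_s)\big)+\tilde x^\top v_s$; the strong monotonicity \eqref{F-strong-monotone} controls the first term by $-\tfrac12\|\tilde x\|^2$, while $\tilde x^\top v_s\leq0$ follows from $v_s\in\caln_{\calx}(x_s)$ together with $x\in\calx$. Hence $\dot V\leq-\tfrac12\|\tilde x\|^2\leq0$.

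The main obstacle is that $V$ is only negative semidefinite: it certifies boundedness of $(x,u)$ but decreases only in the $x$-direction, so a strict-Lyapunov argument is unavailable and indeed $u$ need not converge. To close the proof I would therefore use an integral/Barbalat argument rather than LaSalle: from $\dot V\leq-\tfrac12\|\tilde x\|^2$ and $V\geq0$ one gets $\tilde x\in\call_2$, while boundedness of $x$ and $u$ together with the fact that the projection $\proj_{\calt_\calx(x)}(\cdot)$ onto the tangent cone is nonexpansive and fixes the origin gives $\|\dot x\|\le\|-F(x)+u\|$, so $\dot x\in\call_\infty$ and $\|\tilde x\|^2$ is uniformly continuous. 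Barbalat's lemma then yields $\tilde x(t)\to\bze$, i.e. $x(t)\to x_s$.
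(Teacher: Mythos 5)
Your proposal is correct and follows the paper's proof almost verbatim in its core: the same stacked projected system on $\calx\times\calu$ with $T(\xi)=\col(F(x)-u,\,x-x_s)$, the same monotonicity argument for existence/uniqueness via \cite[Thm.~1]{brogliato2006equivalence}, the same reference pair $(x_s,u_s)$ drawn from $x_s\in\cals$, and the same Lyapunov candidate $V=\tfrac12\|\tilde x\|^2+\tfrac12\|\tilde u\|^2$ leading to $\dot V\leq-\tfrac12\|\tilde x\|^2$. (Your handling of the sign step via $v_s\in\caln_{\calx}(x_s)$ and $\tilde x^\top v_s\leq 0$ is just a rephrasing of the paper's trick of adding the variational inequality $(x-x_s)^\top(F(x_s)-u_s)\geq 0$ to the bound; the two are algebraically identical.) The only genuine divergence is the closing argument: the paper invokes an invariance principle for discontinuous (projected) systems \cite[Prop.~2.1]{cherukuri2016asymptotic} on a compact sublevel set of $V$ and identifies the largest invariant set in $\{\tilde x=\bze\}$, whereas you use an $\call_2$/Barbalat argument, exploiting that $\Pi_{\calx}(x,\cdot)$ is nonexpansive and fixes the origin to get $\dot x\in\call_\infty$ and hence uniform continuity of $\|\tilde x\|^2$. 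Both routes are valid here; yours is arguably the more elementary one, since it sidesteps the measurability and invariance technicalities of LaSalle-type arguments for nonsmooth flows, at the (small) cost of saying nothing about the limiting behavior of $u(t)$ --- which the statement does not require anyway.
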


\begin{proof}
By using \eqref{ne-alg} and \eqref{u-dynamic}, the dynamics of the overall closed-loop system is given by
\begin{equation}\label{e:cl-dynm-int}
\dot{\xi}=\Pi_\Lambda\big(\xi,-T(\xi)\big),
\end{equation}
where $ \xi=\col(x,u) $,  $\Lambda=\calx\times \calu$ and
\begin{equation*}
T(\xi)=\begin{bmatrix}
	F(x)-u\\ x-x_s
\end{bmatrix}.
\end{equation*}
We deduce from  strong monotonicity of $F$  (see \eqref{F-strong-monotone}) that the above mapping is monotone, and  the set $ \Lambda $ is closed and convex. We then obtain from \cite[Thm. 1]{brogliato2006equivalence} that, for any initial condition $ \xi(0)\in\Lambda $, the dynamics \eqref{e:cl-dynm-int} admits a unique solution $ \xi(t) $ for all $ t\geq 0 $.

It follows form $ x_s\in\cals $ that there exists a $ u_s\in\calu $ such that
\begin{equation}\label{vi-ineq-us}
(y-x_s)^\top\big( F(x_s)-u_s \big)\geq 0,\quad \forall y\in\calx.
\end{equation}
Next we use the inequality above and prove that $ (x(t),u(t)) $ converges to $ (x_s,u_s) $. To this end, 
consider the 	Lyapunov candidate $ V(\xi)=\frac{1}{2}\|\tilde{x}\|^2+\frac{1}{2}\|\tilde{u}\|^2 $ with $ \tilde{x}=x-x_s $ and $ \tilde{u}=u-u_s $. The time-derivative of the evolution of $ V $ along the solution of \eqref{e:cl-dynm-int} satisfies
\begin{multline*}
\nabla V(\xi)^\top \Pi_\Lambda\big(\xi,-T(\xi)\big)=\tilde{x}^\top \big(-F(x)+u\big)-\tilde{u}^\top \tilde{x}\\- \tilde{x}^\top \proj_{\caln_{\calx}(x)}\big(-F(x)+u\big)
-\tilde{u}^\top \proj_{\caln_{\calu}(u)}(-\tilde{x}),
\end{multline*}
where we have used the Moreau’s decomposition theorem.
Since $x,x_s\in\calx$ and $u,u_s\in\calu$, we have  $ - \tilde{x}^\top \proj_{\caln_{\calx}(x)}\big(-F(x)+u\big)\leq 0$ and $ -\tilde{u}^\top \proj_{\caln_{\calu}(u)}(-\tilde{x})\leq 0$, respectively. We then obtain that
\begin{align*}
\nabla V(\xi)^\top \Pi_\Lambda\big(\xi,-T(\xi)\big)	\leq \tilde{x}^\top \big(-F(x)+u\big)-\tilde{u}^\top \tilde{x}.
\end{align*}
Now we add the left-hand side of \eqref{vi-ineq-us} evaluated at $ y=x $
to the right-hand side of the foregoing inequality to get
\begin{align*}
\nabla V(\xi)^\top \Pi_\Lambda\big(\xi,-T(\xi)\big)	&\leq -\tilde{x}^\top \big(F(x)-F(x_s)\big)\\
&=-\|\tilde{x}\|_{(I-\frac{1}{2}a(P+P^\top))}^2 ,
\end{align*}
where the equality follows from the definition of $ F $ given by \eqref{e:f-map}. Note that $ I-\frac{1}{2}a(P+P^\top)\succeq \frac{1}{2}I $ as a consequence of Assumption \ref{asm:opt}, hence we deduce that
\begin{equation*}
\nabla V(\xi)^\top \Pi_\Lambda\big(\xi,-T(\xi)\big)	\leq -\frac{1}{2}\|\tilde{x}\|^2.
\end{equation*}
Let  $\xi_0\in \Lambda$, and $ \xi(t) $ be a solution starting from the initial condition $ \xi(0)=\xi_0 $. Moreover, 
let  $ \delta:=V(\xi_0) $ and define the set $ \Omega:=\{\xi\in \Lambda  \mid V(\xi)\leq \delta\} $. Note that $ \xi(0)\in \Omega $, and $ \Omega $ is compact since $V(\xi) \to \infty  $ as $ \|\xi\|\to\infty $. It also follows from the inequality above that the solution $ \xi(t) $ remains in $ \Omega $. We then use the invariance principle for discontinuous systems \cite[Prop. 2.1]{cherukuri2016asymptotic} to conclude that the solution of the closed-loop system  converges to the largest invariant set contained in $ \{\xi\in \Omega \mid \nabla V(\xi)^\top \Pi_\Lambda\big(\xi,-T(\xi)\big)=0 \} $. This together with the inequality above imply that $ \xi(t) $ also converges to the largest invariant set in $ \{\xi\in \Omega \mid \tilde{x}=\bze \} $. We therefore conclude that for any initial condition $ \xi(0)\in  \Lambda$, the action profile $ x(t) $  converges to $ x_s $, and this completes the proof.
\end{proof}

\subsection{Adaptive intervention with known standalone marginal returns}
Recall that in case the regulator knows $ aP $ or the social optimum $ x_\mathrm{opt} $, she can steer the players to the social optimum by implementing the previously discussed interventions. Here, we shift our focus to the case where both $aP$ and $x_\mathrm{opt}$ are unknown to the regulator, and she merely has knowledge about the individual standalone marginal returns of the players $b$. It turns out that such limited knowledge  substantially complicates the problem faced by the regulator. To partially tame this complexity, we restrict our attention in this subsection to the case of unconstrained actions and interventions, i.e., $ \calx_i=\R $ and $ \calu=\R^n $, and undirected networks, i.e. $ P=P^\top $. As a result, the pseudo-gradient dynamics \eqref{ne-alg} simplifies to the following:
\begin{equation}\label{ne-alg-unconst}
\dot{x}(t)=(-I+aP)x(t)+b+u(t).
\end{equation}

A natural approach to tackle this problem is to resort to adaptive control techniques which potentially allow to compensate for lack of complete knowledge on the system dynamics. However, there are certain obstacles that hinder an application of standard adaptive control schemes. First, a control design based on the regulation error $ x(t)-x_\mathrm{opt} $ is not feasible since $ x_\mathrm{opt} $ is unknown. A second attempt would be to try to estimate $x_\mathrm{opt}$ by using a reference model such as $\dot{x}_m(t)=(-I+2aP)x_m(t)+b$. However, while $x_m(t)$ converges to $ x_\mathrm{opt} $ (see Corollary \ref{cor:state-fb} with $P=P^T$), the reference model is not implementable as the network matrix $aP$ is unknown. 

To overcome these challenges, we propose the  \emph{adaptive feedback} intervention protocol 
\begin{equation}\label{u-adap}
u(t)=K(t) x(t)
\end{equation}
with an adaptive gain matrix $ K(t) $ determined by the following extended nonlinear dynamics:

\begin{subequations}\label{adap}
\begin{align}
\dot{z}(t)&=-z(t)+K(t)x(t)+b+u(t),\label{adap1}\\
\dot{w}(t)&=-w(t)+e(t)x^\top(t)x(t),\label{adap2}\\
\dot{K}(t)&=e(t)x^\top(t),\label{adap3}
\end{align}
\end{subequations} 
where
\begin{equation*}
e(t):=x(t)-z(t)-w(t).
\end{equation*}
Note that the intervention only uses information on $ b $, and no knowledge on  $ aP $ or $ x_\mathrm{opt} $ is required. The first dynamics \eqref{adap1} aims to replicate the pseudo-gradient dynamics \eqref{ne-alg-unconst} and generate $ z(t) $ such that it tracks the action profile  $ x(t) $.  The second dynamics \eqref{adap2} is included for technical reasons and is needed to guarantee boundedness of all solutions. The last dynamics \eqref{adap3} is chosen such that sign-indefinite terms in the time-derivative	of the Lyapunov function are canceled out. As a result, all solutions of the closed-loop system are bounded as stated in the following lemma. 
\begin{lemma}\label{lem:adap-bound}
Consider the pseudo-gradient dynamics \eqref{ne-alg-unconst} and let $P=P^\top$.
Then, under the  adaptive feedback intervention 
given by \eqref{u-adap} and \eqref{adap}, all solutions of the closed-loop system are  bounded. 
\end{lemma}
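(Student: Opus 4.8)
The plan is to measure the adaptive gain against the ideal feedback gain singled out in Corollary~\ref{cor:state-fb}. Since $P=P^\top$, that corollary shows the reference closed loop $\dot x=(-I+2aP)x+b$ drives the action profile to $x_\mathrm{opt}$, so the natural ideal gain is $aP$. I would therefore set $\tilde K:=K-aP$ and first compute the dynamics of the mismatch variable $e=x-z-w$. Substituting $u=Kx$ into \eqref{ne-alg-unconst} and \eqref{adap}, and using $z+w=x-e$, a direct calculation yields the error system $\dot e=-\tilde K x-e(1+\|x\|^2)$ together with $\dot{\tilde K}=\dot K=e x^\top$. The crucial point is that the gradient law \eqref{adap3} is tuned precisely so that the indefinite cross term generated by $-\tilde K x$ in $\dot e$ is matched by the contribution of $\dot{\tilde K}$.

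Next I would use the Lyapunov function $V=\tfrac12\|e\|^2+\tfrac12\|\tilde K\|_{\mathrm F}^2$. Along the error system its derivative is $\dot V=-e^\top\tilde K x-\|e\|^2(1+\|x\|^2)+\Tr(\tilde K^\top e x^\top)=-\|e\|^2(1+\|x\|^2)\le 0$, where the first and third terms cancel by the cyclic property of the trace. Monotonicity of $V$ immediately gives $e,\tilde K\in\call_\infty$, hence $K\in\call_\infty$; integrating $\dot V$ over $[0,\infty)$ yields the stronger summability $e\in\call_2$ and, crucially, $\|e\|\,\|x\|\in\call_2$. The quadratic weight $\|x\|^2$ responsible for this sharpened bound on the product $\|e\|\,\|x\|$ is exactly the role of the auxiliary state $w$ in \eqref{adap2}.

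The main obstacle is that boundedness of $e$ and $K$ does \emph{not} by itself bound $x$ (equivalently $z+w$), since the closed loop $\dot x=(-I+2aP)x+\tilde K x+b$ is corrupted by the possibly large multiplicative term $\tilde K x$. To defuse this I would eliminate $\tilde K x$ through the error equation, i.e. $\tilde K x=-\dot e-e(1+\|x\|^2)$, and introduce $y:=x+e$, which then obeys $\dot y=Ay-2aP\,e-e\|x\|^2+b$ with $A:=-I+2aP\preceq-\lambda I$ for some $\lambda>0$ by Assumption~\ref{asm:opt}. Estimating $\tfrac{d}{dt}\tfrac12\|y\|^2$ and using $\|x\|\le\|y\|+\|e\|$, the cubic term $y^\top e\|x\|^2$ is dominated by $g\|y\|^2+g\|e\|\,\|y\|$ with $g:=\|e\|\,\|x\|\in\call_2$. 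Passing to $r:=\|y\|$ produces the scalar comparison inequality $\dot r\le(-\lambda+g)r+\beta$, where the forcing $\beta$ collects a constant part from $\|b\|$ together with an $\call_2$ part coming from $e$ and $g\|e\|$.

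Finally I would close the argument by integrating this scalar inequality: by Cauchy--Schwarz and completing the square, $\int_s^t(-\lambda+g)\,d\tau\le-\tfrac\lambda2(t-s)+\tfrac{\|g\|_2^2}{2\lambda}$, so the transition kernel decays exponentially up to a constant factor and lies in $\call_1\cap\call_2$. Convolving this kernel against the forcing $\beta$ then bounds $r$: the $\call_2$ part is handled by Cauchy--Schwarz against the $\call_2$ kernel, and the constant $\|b\|$ part by the $\call_1$ kernel, so $r\in\call_\infty$. Hence $y$, and therefore $x=y-e$, is bounded. Boundedness of the remaining states follows routinely, since $w$ is the output of the stable filter \eqref{adap2} driven by the now-bounded input $e\|x\|^2$, and $z=x-e-w$. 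I expect the delicate step to be this last comparison estimate, where the merely $\call_2$ (not $\call_1$) nature of $g$ forces the Cauchy--Schwarz/completion-of-squares bound on the kernel in place of a naive Grönwall argument.
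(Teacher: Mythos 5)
Your proof is correct, and the first part (the error system in $(e,\tilde K)$ with $\tilde K=K-aP$, the Lyapunov function $V=\tfrac12\|e\|^2+\tfrac12\|\tilde K\|_{\mathrm F}^2$, the cancellation via the trace identity, and the resulting conclusions $e,\tilde K\in\call_\infty$ and $e,\ \|e\|\|x\|\in\call_2$) coincides exactly with Step~1 of the paper's proof. Where you genuinely diverge is in the hard part, namely deducing boundedness of $x$ despite the multiplicative disturbance $\tilde K x$ in $\dot x=(-I+2aP)x+\tilde K x+b$. The paper attacks this with the standard robust-adaptive-control machinery: it introduces the normalizing signal $\ell=\sqrt{1+\|x\|_{2\delta}^2}$, bounds $x,z,w$ affinely in $\ell$, performs a swapping decomposition $\tilde K x=p+q_1+q_2-\beta e$ with auxiliary filters parametrized by a large gain $\beta$, and closes the loop with the Bellman--Gronwall lemma. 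You instead exploit the specific algebraic structure of this problem: since $\tilde K x$ appears with opposite signs in $\dot x$ and $\dot e$ (up to the terms $-e-e\|x\|^2$), the change of variable $y=x+e$ eliminates it entirely, leaving $\dot y=(-I+2aP)y-2aPe-e\|x\|^2+b$ with a Hurwitz (indeed symmetric negative definite, by Assumption~\ref{asm:opt} and $P=P^\top$) drift; the cubic term is then absorbed into a time-varying decay rate $-\lambda+g$ with $g=\|e\|\|x\|\in\call_2$, and the completion-of-squares bound $\int_s^t g\,d\tau\le\tfrac{\lambda}{2}(t-s)+\tfrac{\|g\|_2^2}{2\lambda}$ keeps the transition kernel exponentially decaying. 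Your route is shorter, more elementary, and avoids the external lemmas from \cite{ioannou2012robust}; the paper's route is heavier but is the one that would survive if the regressor in the adaptation law were not so conveniently the same signal $\tilde K x$ that perturbs the plant. Two small points you should make explicit in a polished write-up: pass from $\tfrac{d}{dt}\tfrac12\|y\|^2$ to the scalar inequality using the upper Dini derivative (or argue directly on $\|y\|^2$) to avoid the non-differentiability of $\|y\|$ at the origin, and note that all estimates are first derived on the maximal interval of existence so that boundedness rules out finite escape time.
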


\begin{proof}
See the appendix.
\end{proof}

The next result establishes convergence to the social optimum  $ x_\mathrm{opt} $.
\begin{theorem}
Let $P=P^\top$ and consider the pseudo-gradient dynamics \eqref{ne-alg-unconst} interconnected with the adaptive feedback intervention given by \eqref{u-adap} and \eqref{adap}.
Then,	the action profile $ x(t) $  converges to the social optimum $ x_\mathrm{opt} $. 
\end{theorem}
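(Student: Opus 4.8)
The plan is to recast everything in terms of the gain estimation error $\tilde K := K - aP$ and the auxiliary signal $e$, exhibit a Lyapunov function that is nonincreasing and sign-definite in $e$, and then extract convergence of $x$ via a two-stage Barbalat argument — crucially, \emph{without} attempting to prove parameter convergence $K\to aP$, which we cannot expect in general.

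First I would derive the error dynamics. Substituting $u=Kx$ into \eqref{ne-alg-unconst} and into \eqref{adap1}--\eqref{adap3}, a direct computation using $e=x-z-w$ gives
\begin{align*}
\dot e &= -(1+\|x\|^2)e - \tilde K x,\\
\dot{\tilde K} &= e x^\top.
\end{align*}
For $V=\tfrac12\|e\|^2+\tfrac12\|\tilde K\|_\mathrm{F}^2$ the cross terms cancel, since $\Tr(\tilde K^\top e x^\top)=e^\top\tilde K x$, yielding $\dot V = -(1+\|x\|^2)\|e\|^2\le 0$. This is exactly the mechanism behind Lemma \ref{lem:adap-bound}; invoking that lemma, all of $x,z,w,K$, and hence $e,\tilde K,\dot x$, are bounded. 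Since $V$ is nonincreasing and bounded below, $\int_0^\infty (1+\|x\|^2)\|e\|^2\,dt<\infty$, so $e\in\call_2\cap\call_\infty$.

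Next I would apply Barbalat twice. Because $\dot e$ is bounded, $e$ is uniformly continuous, so $e\in\call_2$ gives $e\to 0$ (and, as a byproduct, $w\to 0$ since $\dot w=-w+e\|x\|^2$ is a stable filter driven by the vanishing input $e\|x\|^2$). The delicate step is to upgrade this to $\dot e\to 0$: differentiating the $\dot e$ equation shows $\ddot e$ is bounded (using $\dot{\tilde K}x=e\|x\|^2$ together with boundedness of $x,\dot x,\tilde K$), so $\dot e$ is uniformly continuous, while $\int_0^t\dot e\,d\tau=e(t)-e(0)$ converges; Barbalat then forces $\dot e\to 0$. Reading off the $\dot e$ equation, $\tilde K x=-(1+\|x\|^2)e-\dot e\to 0$.

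Finally I would close the argument on the regulation error $\epsilon:=x-x_\mathrm{opt}$. Using the unconstrained optimality condition $H(x_\mathrm{opt})=\bze$ with $P=P^\top$, i.e.\ $b=(I-2aP)x_\mathrm{opt}$, and writing $K=aP+\tilde K$, the plant equation rearranges into the reference-model form
\[
\dot\epsilon=(-I+2aP)\epsilon+\tilde K x,
\]
which is precisely the stable dynamics underlying Corollary \ref{cor:state-fb} perturbed by the vanishing signal $\tilde K x$. By Assumption \ref{asm:opt} with $P=P^\top$ we have $I-2aP\succ\bze$, so $-I+2aP$ is Hurwitz, and a converging input into a Hurwitz LTI system produces a converging state; hence $\epsilon\to 0$, i.e.\ $x\to x_\mathrm{opt}$. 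The main obstacle is conceptual as much as technical: without a persistency-of-excitation condition one cannot conclude $K\to aP$, so the proof must sidestep parameter convergence and establish only that the \emph{product} $\tilde K x$ vanishes — and this is precisely what the second Barbalat step ($\dot e\to 0$) is needed for.
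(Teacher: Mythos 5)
Your proof is correct, and it reaches the conclusion by a genuinely different route than the paper. Both arguments share the same starting point: the error coordinates $(e,\tilde K)$ with $\tilde K=K-aP$ (the paper writes $\Psi$), the Lyapunov function $V=\tfrac12\|e\|^2+\tfrac12\|\tilde K\|_{\mathrm F}^2$ with $\dot V=-(1+\|x\|^2)\|e\|^2$, and Lemma \ref{lem:adap-bound} for boundedness. Where you diverge is in extracting convergence of $x$. The paper applies LaSalle's invariance principle, tailored to a single bounded trajectory precisely because the sublevel sets of $V$ are not compact in the full state space; it then characterizes the largest invariant set inside $\{e=\bze\}$, where invariance forces $\Psi x=\bze$ and $\dot x=(-I+2aP)x+b$, and uses the fact that the only trajectory of this Hurwitz system remaining in a compact set for all time is $x\equiv x_{\mathrm{opt}}$. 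You instead run Barbalat twice — first $e\to\bze$, then $\dot e\to\bze$ via boundedness of $\ddot e$ — to obtain the explicit intermediate fact $\tilde K x\to\bze$, and then treat $\dot\epsilon=(-I+2aP)\epsilon+\tilde Kx$ as a Hurwitz system with a vanishing perturbation. Your version is more elementary (no invariance-principle machinery, no limit-set topology) and more quantitative, since it isolates the vanishing of the product $\tilde Kx$ as the operative mechanism; the paper's version is shorter once the single-trajectory LaSalle argument is set up and identifies the limit set in one stroke. Both correctly avoid claiming parameter convergence $K\to aP$: the paper's invariant set only pins down $\Psi x_{\mathrm{opt}}=\bze$, exactly mirroring your observation that only the product $\tilde Kx$ need vanish. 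All the individual steps you use check out, including the error dynamics $\dot e=-(1+\|x\|^2)e-\tilde Kx$, the identity $b=(I-2aP)x_{\mathrm{opt}}$ in the unconstrained setting, and the Hurwitz property of $-I+2aP$ from Assumption \ref{asm:opt} with $P=P^\top$.
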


\begin{proof}

Let $ \xi:=(x,e,\Psi) $ with $ \Psi=K-aP $. Then, bearing in mind \eqref{ne-alg-unconst}, \eqref{u-adap} and \eqref{adap}, $\xi$ admits the following dynamics
\begin{subequations}\label{incr-adap-thm}
\begin{align}
	\dot{x}&=(-I+2aP)x+b+\Psi x, \label{x-psi-relat-thm}\\
	\dot{e}&=-e-\Psi x-ex^\top x,\label{incr-adap1-thm}\\
	\dot \Psi&= e x^\top.\label{incr-adap2-thm}
\end{align}
\end{subequations}
We proceed by following similar arguments as in the proof of the LaSalle's invariance principle \cite[Thm. 4.4]{khalil2002nonlinear}, but the proof is tailored for a single (yet arbitrary) trajectory.
Let  $\xi_0:=(x_0,e_0,\Psi_0)$ with some $ x_0,e_0\in\R^n $ and $ \Psi_0\in \R^{n\times n} $, and $ \xi(t) $ be a solution starting from the initial condition $ \xi(0)=\xi_0 $. It follows from Lemma \ref{lem:adap-bound} that this solution is bounded. 
Thus, there exists a compact set $\cal{D}$ such that $\xi(t)\in \cal{D}$ for all $t\geq 0$. It also follows from \cite[Lem. 4.1]{khalil2002nonlinear} that the positive limit set $ \Omega $ of $ \xi(t) $ is  nonempty, compact, and invariant. Moreover, $ \xi(t) $ approaches $ \Omega $ as $ t $ tends to infinity.

We now consider  the function  
$$ V(\xi):= \frac{1}{2}\|e\|^2+\frac{1}{2}\|\Psi\|_{\mathrm{F}}^2,$$
where we recall that $\|\Psi\|_{\mathrm{F}}$ is the Frobenius norm. 
The derivative of $ V $ along the solutions of \eqref{incr-adap-thm} is
\begin{equation}\label{vdot-adap-thm}
\begin{split}
	\dot{V}&=-\|e\|^2-e^\top\Psi x-\|e\|^2 \|x\|^2+\Tr(\Psi^\top e x^\top)\\
	&=-\|e\|^2-\|e\|^2 \|x\|^2,
\end{split}
\end{equation}
where the last equality is obtained using $ e^\top\Psi x=\Tr(\Psi^\top e x^\top) $. Therefore, we have  $ V\geq 0 $ and $ \dot{V}\leq 0 $ which 
implies that $ V(\xi(t)) $ has a limit $ V_\infty\geq 0 $ as $ t\to \infty $. Pick any point $ \xi'\in \Omega $, then there is a sequence $ \{t_n\} $, with $ t_n\to\infty $ as $ n\to\infty $, such that $ \xi(t_n)\to \xi' $ as $ n\to\infty $. We  obtain from continuity of $ V $ that $ V(\xi')=\lim_{n\to\infty}V(\xi(t_n))= V_\infty$. Therefore, since $ \xi' $ is chosen arbitrary, we deduce  that $ V(\xi)=V_\infty $ for all $\xi \in \Omega $, which means that on the invariant set $ \Omega $, the function $ V $ is constant. Moreover, we have $ \dot{V}(\xi(t))=0 $ for all $\xi(t)\in \Omega$. Let $ E:=\{\xi\in{\cal{D}}\mid \dot{V}(\xi)=0\} $, then we have $ \Omega \subset E $. Now let $ M $ be the largest invariant set inside $ E $, subsequently we have the following relations
\begin{equation*}
\Omega \subset M \subset E \subset \mathcal{D}.
\end{equation*}
Noting that $ \xi(t) $ approaches $ \Omega  $ as $ t\to \infty $, we obtain that $ \xi(t) $ approaches $ M $ as $ t\to\infty $.

The last step is to find the set $ M $. Note from the definition of $ E $ and \eqref{vdot-adap-thm} that $ E = \{\xi\in\mathcal{D}\mid e=\bze\} $. Thus, on the invariant set $M$, the dynamics of \eqref{incr-adap-thm} 
reads as
\begin{align*}
\dot{x}&=(-I+2aP)x+b,\\
\bze &=-\Psi x,\\
\dot \Psi&=\bze.
\end{align*}
Noting that $-I+2aP$ is Hurwitz as a consequence of Assumption \ref{asm:opt},
the largest invariant set in $E$ is given by
\begin{equation*}
M=\big\{\xi\in\mathcal{D}\mid x=x_\mathrm{opt},\ e=\bze,\ \Psi x_\mathrm{opt}=\bze \big\}.
\end{equation*}
Consequently, we conclude that $ x(t) $ converges to $ x_\mathrm{opt} $ as desired.
\end{proof}

\section{Illustrative examples}\label{sec:sim}
We consider a Cournot competition where a set of $ \cali=\{1,\dots,10\} $ firms produce differentiated goods \cite{bramoulle2014strategic}. For each firm $ i $, we denote the amount of good by $ x_i\in\calx_i$, and its corresponding price is obtained from the  inverse demand function 
$
p_i(x) = \alpha_i - \frac{1}{2}\left(x_i + 2 \beta \sum_{j \neq i}P_{ij} x_j\right).
$
In this equation, $ \alpha_i>0 $ is the maximum price that consumers would pay for the good, $ \beta P_{ij}\geq 0 $ is the degree of product substitutability, where
$ P_{ij}\in (0,1] $ if the product of firm $ j $ is a substitute for firm $ i $ and $ P_{ij}=0 $ otherwise.\footnote{This is slightly different from \cite{bramoulle2014strategic} which considers  $ P_{ij}=P_{ji}\in \{0,1\} $.} 
The payoff function of firm $ i $, therefore, can be written in the form of \eqref{u-pay-def}, \eqref{w-pay-def} as follows
\begin{align*}
U_i(x_i,x,u_i)&=x_i p_i(x) - x_i d_i + x_i u_i\\
&=- \frac{1}{2}x_i^2+x_i\Big(a \sum_{j \neq i}P_{ij} x_j+b_i\Big)+ x_i u_i,
\end{align*}
where $ a=-\beta $ and $ b_i=\alpha_i-d_i $ with $ d_i>0 $ being the marginal cost and $u_i$ reflects taxes or subsidies provided by the regulator. 

Next we present the simulation results under our interventions and illustrate convergence of the players' actions to the social optimum.

\subsection{Open-loop, static feedback and dynamic interventions}
Here, we 
consider a competition where  $ x_i\in\calx_i=\R_{\geq 0} $, $ \beta = 0.2 $, and the products of the firms are substitutable according to the weighted directed graph depicted in Fig. \ref{fig:directed-graph}. In this graph, the weight of each link from firm $j$ to firm $i$ denotes the weight $P_{ij}$, and the number next to each node $ i $ indicates its  standalone marginal return, namely $ \alpha_i-d_i $.

The social optimum of this game is 
\begin{equation*}
x_\mathrm{opt}=\col(2.19,
0.01,
0.99,
0.49,
1.34,
3.4,
0,
0,
0.99,
0.04).
\end{equation*}
The regulator incentivizes the firms to the social optimum by applying bounded taxes $ u\in\calu $ where $ \calu = [-2,0]^{10} $. Next we use intervention mechanisms to obtain suitable taxes.

\begin{figure}
\centering
\includegraphics[width=\columnwidth]{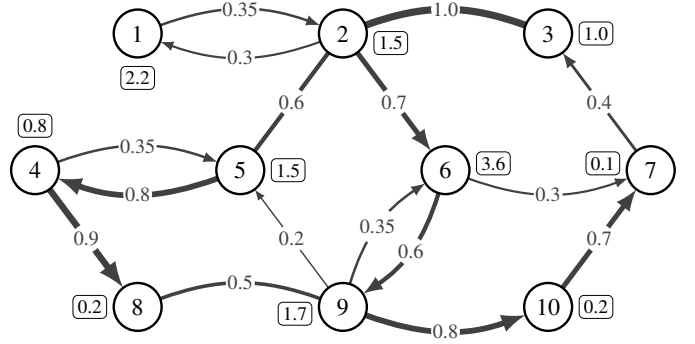}
\caption{The directed network illustrating asymmetrical product substitutability.}
\label{fig:directed-graph}
\end{figure}

\bigskip

\noindent\emph{Open-loop intervention:} Having full information of the game, the regulator can find the social optimum given above as well as $ \caln_{\calx}(x_\mathrm{opt}) $. 
It then follows from \eqref{u-static} that
$
u_\mathrm{opt}=-\col(0.001,\allowbreak
0.96,\allowbreak
0.003,\allowbreak
0.09,\allowbreak
0.08,\allowbreak
0.11,\allowbreak
0,\allowbreak
0.01,\allowbreak
0.29,\allowbreak
0).
$
The simulation results under the intervention $ u(t)\equiv u_\mathrm{opt}$ with the pseudo-gradient dynamics \eqref{ne-alg}  and initialized arbitrarily are shown in Fig. \ref{fig:static_openloop}, which demonstrates convergence of the players' actions to the social optimum.

\begin{figure}
\centering
\includegraphics[width=\columnwidth]{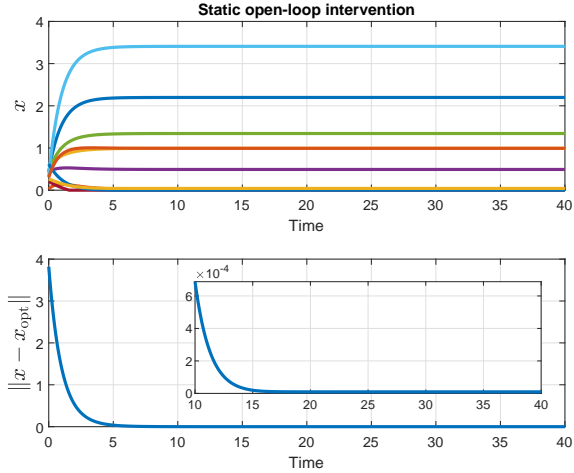}
\caption{Actions of the players and their distance to social optimum under static open-loop intervention.}
\label{fig:static_openloop}
\end{figure}

\bigskip

\noindent\emph{Static feedback intervention:} Under the assumption that the regulator knows $ aP $, she can implement \eqref{u-static-fb}. The regulator can, therefore, steer the actions of the players to the social optimum as shown in Fig. \ref{fig:static_feedback}.

\begin{figure}
\centering
\includegraphics[width=\columnwidth]{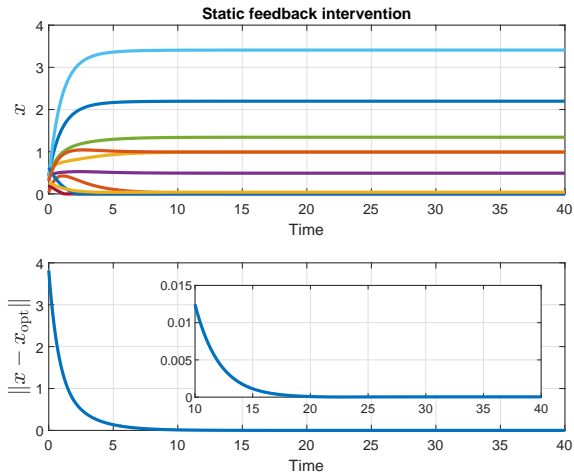}
\caption{Actions of the players and their distance to social optimum under static feedback intervention.}
\label{fig:static_feedback}
\end{figure}

\bigskip

\noindent\emph{Dynamic intervention:} We assume that the regulator only knows the value of the social optimum, she can then implement \eqref{u-dynamic} with $ x_s = x_\mathrm{opt} $. As a result, the players' action profile converges to $ x_\mathrm{opt} $ as desired, and this is illustrated in Fig. \ref{fig:dynamic}.

\begin{figure}
\centering
\includegraphics[width=\columnwidth]{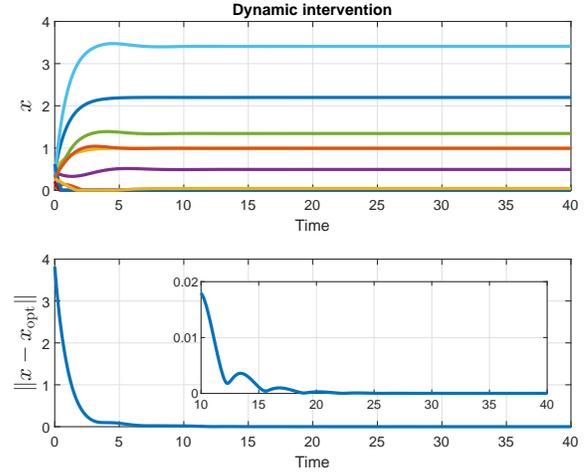}
\caption{Actions of the players and their distance to social optimum under dynamic intervention.}
\label{fig:dynamic}
\end{figure}

\subsection{Adaptive intervention}
In order to demonstrate performance of the adaptive intervention, we consider the case $ \calx_i=\R $ and assume that product substitutability is represented by the undirected graph in Fig. \ref{fig:undirected-graph}, where the weight of a link between firms $j$ and $i$ denotes the value $P_{ij}=P_{ji}$.
The actions of the firms are obtained from the pseudo-gradient dynamics \eqref{ne-alg-unconst} with arbitrary initial conditions.

\begin{figure}
\centering
\includegraphics[width=\columnwidth]{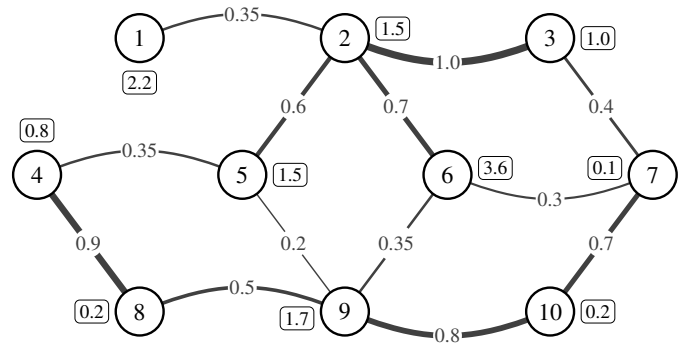}
\caption{The undirected network illustrating symmetrical product substitutability.}
\label{fig:undirected-graph}
\end{figure}

For this game, we obtain $
x_\mathrm{opt}=\col(2.31,\allowbreak
-0.79,\allowbreak
1.41,\allowbreak
0.68,\allowbreak
1.50,\allowbreak
3.73,\allowbreak
-0.57,\allowbreak
-0.26,\allowbreak
1.10,\allowbreak
0.006).$ To steer the players to this point, the regulator applies the adaptive intervention \eqref{u-adap}. Note that here $ u\in\R^n $, which represents taxes and subsidies. Fig. \ref{fig:adaptive} shows the actions of the players under this intervention and demonstrates convergence to the social optimum.

\begin{figure}
\centering
\includegraphics[width=\columnwidth]{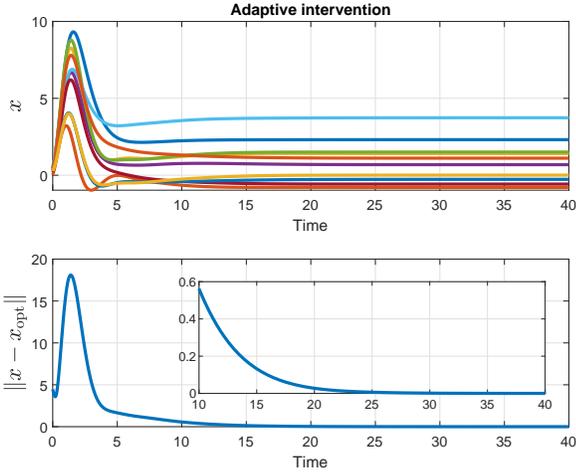}
\caption{Actions of the players and their distance to social optimum under adaptive intervention.}
\label{fig:adaptive}
\end{figure}

\section{Conclusions}\label{sec-conclusion}
We have proposed intervention protocols that are able to alter the outcome of noncooperative network games toward the social optimum. 
We investigated different sets of information available to the regulator and proposed intervention mechanisms tailored to each case. Convergence to the maximizer of the social welfare function is analytically shown for all the proposed  mechanisms, and the efficiency of the proposed protocols is demonstrated on a numerical case study of Cournot competition with differentiated goods.
Future works include extension of the results to  network games with general payoff functions and to the case where the actions of the  players share coupled constraints.

\bibliographystyle{IEEEtran}
\bibliography{MyReferences}

\section*{Appendix: proofs of the technical lemmas}
\noindent\textit{Proof of Lemma \ref{lem:soci-uniq}.}
The inequality 
\eqref{e:iff-sc} is equivalent to the matrix inequality  $ -I+a(P+P^\top) \prec \bze $ and thus to strong concavity of  $x \mapsto \sum_{i \in \cali} W_i\big(x_i,z_i(x)\big) $. The latter map admits at most one maximizer over the closed convex set $ \calx $ \cite[Prop. 2.1.1]{bertsekas99nonlinear}. To show the existence of such unique maximizer,  pick a point $p \in \mathcal{X}$ and define the following set:
$$ \caly:=\left\{y\in\calx \mid \sum_{i \in \cali} W_i\big(p_i,z_i(p)\big) \leq \sum_{i \in \cali} W_i\big(y_i,z_i(y)\big)\right\}.$$
The set $ \caly $ is compact as a result of strong concavity of the social welfare function, and   the maximization problem \eqref{social} is equivalent to
\begin{equation*}
x_\mathrm{opt}\in \argmax_{y\in\caly}\sum_{i\in\cali} W_i\big(y_i,z_i(y)\big).
\end{equation*}
The existence of  $ x_\mathrm{opt} $ then follows from  Weierstrass’ Theorem \cite[Prop. A.8]{bertsekas99nonlinear}, and this concludes the proof.
\squarend

\medskip{}
\noindent\textit{Proof of Lemma \ref{lem:adap-bound}.}
We divide the proof into three parts, 
that include 1) 
proving that certain signals of the overall closed-loop system are $ \call_\infty $ and $ \call_2 $; 2) upper bounding all the closed-loop state variables by a common signal, denoted by $\ell(t)$; and 3) showing that $\ell(t)$ and thus all the state-variables are bounded.

\medskip

\noindent\emph{Step 1 ($ \call_\infty $ and $ \call_2 $  analysis):} We start our proof by analyzing evolution of $ (e,\Psi) $, where  $ \Psi:=K-aP $. It follows from \eqref{ne-alg-unconst}, \eqref{u-adap}, and  \eqref{adap} that
\begin{subequations}\label{incr-adap}
\begin{align}
\dot{e}&=-e-\Psi x-ex^\top x,\label{incr-adap1}\\
\dot \Psi&= e x^\top.\label{incr-adap2}
\end{align}
\end{subequations}
Consider the Lyapunov candidate
\begin{equation}\label{v-def}
V(e,\Psi):=\frac{1}{2}\|e\|^2+\frac{1}{2}\|\Psi\|_{\mathrm{F}}^2,
\end{equation}
where we recall that $\|\Psi\|_{\mathrm{F}}$ is the Frobenius norm. 
The derivative of $ V $ along the solutions of \eqref{incr-adap} is
\begin{equation}\label{vdot-adap}
\begin{split}
\dot{V}&=-\|e\|^2-e^\top\Psi x-\|e\|^2 \|x\|^2+\Tr(\Psi^\top e x^\top)\\
&=-\|e\|^2-\|e\|^2 \|x\|^2,
\end{split}
\end{equation}
where the last equality is obtained using $ e^\top\Psi x=\Tr(\Psi^\top e x^\top) $. Therefore, we have  $ V\geq 0 $ and $ \dot{V}\leq 0 $ which results in
\begin{equation}\label{v-inf}
V_\infty:=\lim_{t\to\infty}V\big(e(t),\Psi(t)\big)\leq V\big(e(0),\Psi(0)\big).
\end{equation}
Thus, we obtain $ e,\Psi \in \call_\infty $. We proceed to show that the closed-loop signals $\dot{\Psi}$, $e$, $e\|x\|$ belong to $ \call_2 $, for any $ x $. Note  from \eqref{vdot-adap} that 
\begin{equation*}
\dot{V}\leq -\|e\|^2.
\end{equation*}
We integrate both sides of the inequality above and use \eqref{v-inf} to get $$ \int_{0}^{\infty}\|e(\tau)\|^2 d\tau\leq V\big(e(0),\Psi(0)\big)-V_\infty<\infty. $$
Consequently, we have $ e\in \call_2 $. Moreover, we deduce from an analogous analysis for $ e\|x\| $ in \eqref{vdot-adap} that $ e\|x\|\in \call_2 $. Now we rewrite the dynamics of $ \Psi $ given in  \eqref{incr-adap2} as follows
\begin{equation}\label{psi-dot-l2}
\dot{\Psi}=e(1+\|x\|)\frac{x^\top}{1+\|x\|}.
\end{equation}
Note that for any $x$, we have $ x/(1+\|x\|)\in \call_\infty $, and $ e(1+\|x\|)\in \call_2 $ since $ e,e\|x\|\in\call_2 $. Thus we derive from \eqref{psi-dot-l2} that $ \dot{\Psi}\in \call_2 $.
We record below our findings in Step 1 of the proof for a later use: 
\begin{itemize}
\item $ \Psi,e \in \call_\infty $,\\
\item $ \dot{\Psi},e,e\|x\|\in\call_2 $.
\end{itemize}

\medskip

\noindent\emph{Step 2 (Determining a common upper bound):}
Consider a solution $ (x(t),z(t),w(t),K(t)) $ of the closed-loop system, made of \eqref{ne-alg-unconst}, \eqref{u-adap}, and \eqref{adap},  starting at an arbitrary initial condition. 
Note that $ K(t) $ is bounded as $ \Psi(t)\in\call_\infty $. Next, we find a common upper bound for the closed-loop signals $ (x(t),z(t),w(t)) $ using the properties established in the previous step. This will allow us to prove boundedness of the all the closed-loop signals in Step 3. 

We proceed the analysis by introducing the following normalizing signal 
\begin{equation}\label{l-def}
\ell(t):=\sqrt{1+\|x(t)\|_{2\delta}^2},
\end{equation} 
where $ \|x(t)\|_{2\delta} $ is the exponentially weighted $ \call_2 $ norm of $ x $ defined as 
\begin{equation*}
\|x(t)\|_{2\delta}:=\Big(\int_0^t \exp\big(-\delta(t-\tau)\big)x^\top(\tau)x(\tau)d\tau\Big)^{\frac{1}{2}}
\end{equation*}
for a given $\delta \geq 0$.
Next we show that the closed-loop signals $(x,z,w)$ can be bounded from above by an affine function of  $ \ell $. Noting $ u=Kx $ and $ \Psi=K-aP $, we rewrite  \eqref{ne-alg-unconst} as 
\begin{equation}\label{x-psi-relat}
\dot{x}=(-I+2aP)x+b+\Psi x.
\end{equation}
Note that $ (-I+2aP) $ is Hurwitz as a consequence of Assumption \ref{asm:opt}, thus there exist constants $ k_0,\alpha_0>0 $ that satisfy 
\begin{equation}\label{transition-ineq}
\big\|\exp\big((-I+2aP)(t-\tau)\big)\big\|\leq k_0 \exp\big(-\alpha_0(t-\tau)\big),
\end{equation}
for all $ \tau\in [0,t] $.	It then follows from \eqref{x-psi-relat}, the established property  $ \Psi\in\call_\infty $, and \cite[Lem. 3.3.3(i)]{ioannou2012robust} that for any given $ \delta\in [0,2\alpha_0) $, there exist constants $ c_0,c_1>0 $ such that  $ \|x\|\leq c_0+c_1 \|x\|_{2\delta}$. Similarly, we  obtain from \eqref{adap1}   that for any $ \delta\in [0,2) $, we have $ \|z\|\leq c_2+c_3 \|x\|_{2\delta}$  for some $ c_2,c_3>0 $. Regarding $ w $, we employ $ e\in\call_\infty $ together with the definition of $ e $ and the upper bounds on $ \|x\| $ and $ \|z\| $ to deduce that for any   $ \delta\in[0,2\min\{1,\alpha_0\}) $, there are $ c_4,c_5>0 $ such that $ \|w\|\leq c_4+c_5 \|x\|_{2\delta} $. Therefore, we can use $ \|x\|_{2\delta} $ and bound from above $ x,z,w $. Note from the definition of $ \ell $ that $ \|x\|_{2\delta}\leq \ell $.  Consequently, for any $ \delta\in[0,2\min\{1,\alpha_0\}) $, the followings hold:
\begin{equation}\label{bound-on-xzw}
\|x\|\leq c_0+c_1\, \ell,\quad \|z\|\leq c_2+c_3\, \ell,\quad \|w\|\leq c_4+c_5\, \ell.
\end{equation}
We see from the above relations that the signal $ \ell $ provides a common upper bound for all the closed-loop signals, thus  these signals are bounded provided that $ \ell $ is bounded.   
\bigskip

\noindent\emph{Step 3 (Boundedness analysis):}  Here, we address boundedness analysis of $ \ell $  using the fact that some of the signals belong to $ \call_2 $. We perform the analysis in two steps. First, we find an implicit upper bound of $ \ell $ which includes the $ \call_2 $ signals. We then use Bellman-Gronwall Lemma to find an explicit upper bound of $ \ell $  and conclude its boundedness.

In this part of the proof, we ease the notation by using $ \bm{c}>0 $ to denote  all positive constants whose actual values do not affect stability of the system. 
In other words, the forthcoming analysis is oblivious to the exact value of $\bm{c}$, and $\bm{c}$ is used merely for simplicity of the presentation. We remark that such notational convention is used in the classical textbook \cite{ioannou2006adaptive}.  

Bearing in mind the definition of $ \ell $ given by \eqref{l-def}, we use  \eqref{x-psi-relat} and \cite[Lem. 3.3.3(ii)]{ioannou2012robust} to infer that  for any $ \delta\in [0,2\alpha_0) $, we have
\begin{equation*}
\|x\|_{2\delta}\leq  \bm{c}+ \bm{c}\|\Psi x\|_{2\delta},
\end{equation*}
where $\alpha_0$ is defined in \eqref{transition-ineq}.
Thus we  obtain from \eqref{l-def} that 
\begin{equation}\label{l-bnd}
\ell^2\leq  \bm{c}+ \bm{c}\|\Psi x\|_{2\delta}^2.
\end{equation}
It follows from the inequality above that  the signal $ \ell$ is bounded from above  by the norm of $ \Psi x $. As a result, we proceed by analyzing $ \Psi x $, and for that, we introduce the following dynamics:
\begin{equation}\label{dyn-p-q}
\begin{alignedat}{2}
\dot{p}&=-\beta p+\dot{\Psi}x+\Psi \dot{x},\quad &&p(0)=\Psi(0)x(0),\\
\dot{q}&=-\beta q+\beta \Psi x,\quad &&q(0)=\bze,
\end{alignedat}
\end{equation}
where $ \beta>0 $. It is then straightforward to verify that $ \Psi x=p+q $. 
Moreover, note from \eqref{incr-adap1} that $ \Psi x=- \dot{e}-e-ex^\top x$. This, together with the dynamics of $ q $, allow us to write $ q=q_1+q_2-\beta e $  where
\begin{equation}\label{dyn-q1-q2}
\begin{alignedat}{2}
\dot{q_1}&=-\beta q_1-\beta (1-\beta)e,\quad &&q_1(0)=\beta e(0),\\
\dot{q_2}&=-\beta q_2-\beta ex^\top x,\quad &&q_2(0)=\bze.
\end{alignedat}
\end{equation}
As a result, we have obtained $ \Psi x=p+q_1+q_2-\beta e $. We next use this relation to find an upper bound of $ \|\Psi x\|_{2\delta} $ which explicitly depends on $ \beta $ and includes the $ \call_2 $ signals. Consequently, this provides us an upper bound for $ \ell $.
Towards this end, we consider the introduced dynamics \eqref{dyn-p-q} and \eqref{dyn-q1-q2} and obtain from  \cite[Lem. 3.3.3(ii)]{ioannou2012robust}  that for any $ \delta\in[0,\delta_1) $ where $ \delta_1\in(0,2\beta) $ is arbitrary, the following relations hold:
\begin{align*}
\|p\|_{2\delta}&\leq  \bm{c}+h(\beta) \, \|\dot{\Psi}x+\Psi \dot{x}\|_{2\delta},\\
\|q_1\|_{2\delta}&\leq  \bm{c}+h(\beta) \, \beta |1-\beta| \|e\|_{2\delta},\\
\|q_2\|_{2\delta}&\leq h(\beta) \, \beta \|ex^\top x\|_{2\delta},
\end{align*}
where 
\begin{equation*}
h(\beta):=\frac{1}{\sqrt{(\delta_1-\delta)(2\beta-\delta_1)}}.
\end{equation*}
The upper bound of $ \|q_2\|_{2\delta} $ does not have an additive constant term $ \bm{c} $ as we have $ q_2(0)=\bze $. Let $\beta > 1$ and $ \delta_1=1 $,  we then deduce that for any given $  \delta\in[0,1) $, the succeeding inequalities are satisfied for all  $ \beta>1 $:
\begin{align*}
\|p\|_{2\delta}&\leq  \bm{c}+ \bm{c}\,\beta^{-\frac{1}{2}} (\|\dot{\Psi}x\|_{2\delta}+\|\Psi \dot{x}\|_{2\delta}),\\
\|q_1\|_{2\delta}&\leq  \bm{c}+ \bm{c}\, \beta^{\frac{3}{2}}\|e\|_{2\delta},\\
\|q_2\|_{2\delta}&\leq  \bm{c}\, \beta^{\frac{1}{2}} \|ex^\top x\|_{2\delta},
\end{align*}
where we have used the triangular inequality to get the first relation. Note that  $  \bm{c} $ does not depend on $ \beta $, this property will be useful later in establishing boundedness of $\ell$. We now employ  the above inequalities in the relation $ \Psi x=p+q_1+q_2+\beta e $ to obtain 
\begin{equation}\label{psi-x-bnd}
\begin{split}
\|\Psi x\|_{2\delta}\leq  &\bm{c}+ \bm{c}\,\beta^{-\frac{1}{2}} (\|\dot{\Psi}x\|_{2\delta}+\|\Psi \dot{x}\|_{2\delta})
\\ &+ \bm{c}\, \beta^{\frac{3}{2}}\|e\|_{2\delta}+ \bm{c}\, \beta^{\frac{1}{2}} \|ex^\top x\|_{2\delta}+\beta \|e\|_{2\delta}.
\end{split}
\end{equation}
Next, we further bound the right-hand side of the inequality above by the $ \call_2 $ signals $ e\|x\|, \dot{\Psi}$ and the normalizing signal $ \ell $. 
For the term $  \beta^{-\frac{1}{2}}\|\dot{\Psi}x\|_{2\delta} $, we use the definition of the exponentially weighted $ \call_2 $ norm to get
\begin{equation}\label{bnd-dpsi-x}
{\small\begin{split}
\beta^{-1} \|\dot{\Psi}x\|_{2\delta}^2&\leq \beta^{-1}\int_{0}^{t}\!\!\exp\big(\!-\delta(t-\tau)\big)\|\dot{\Psi}(\tau)\|^2 \|x(\tau)\|^2d\tau\\
&\leq \beta^{-1}\int_{0}^{t}\!\!\exp\big(\!-\delta(t-\tau)\big)\|\dot{\Psi}(\tau)\|^2 \big( \bm{c}+ \bm{c}\ell(\tau)\big)^2d\tau\\
&\leq  \bm{c}+ \bm{c}\,\beta^{-1}\big\| \|\dot{\Psi}\|\, \ell\big\|_{2\delta}^2,
\end{split}}
\end{equation}
where we used \eqref{bound-on-xzw} to find the second inequality, and the last inequality follows from $ \dot{\Psi}\in\call_2 $ and $ \beta>1 $. Similarly,  we analyze the term $\beta^{-\frac{1}{2}} \|\Psi \dot{x}\|_{2\delta} $ on the right-hand side of \eqref{psi-x-bnd}. First, note from the pseudo-gradient dynamics \eqref{x-psi-relat}, the upper bound on $ \|x\|  $ in \eqref{bound-on-xzw}, and $ \Psi\in\call_\infty $ that $ \|\dot{x}\|\leq  \bm{c}+ \bm{c}\,\ell $. Therefore, we obtain
\begin{equation}\label{bnd-psi-dx}
{\small\begin{split}
\beta^{-1}\|\Psi \dot{x}\|_{2\delta}^2&\leq \beta^{-1}\int_{0}^{t}\!\!\exp\big(\!-\delta(t-\tau)\big)\|{\Psi}(\tau)\|^2 \|\dot x(\tau)\|^2d\tau\\
&\leq \beta^{-1}\int_{0}^{t}\!\!\exp\big(\!-\delta(t-\tau)\big)\|{\Psi}(\tau)\|^2 \big( \bm{c}+ \bm{c}\ell(\tau)\big)^2d\tau\\
&\leq  \bm{c}+ \bm{c}\,\beta^{-1}\big\|  \ell\big\|_{2\delta}^2,
\end{split}}
\end{equation}
where the last relation follows from $ \Psi\in\call_\infty $ and $ \beta>1 $.  We now consider the term $\beta^{\frac{1}{2}}\|ex^\top x\|_{2\delta}$. Following similar steps to $ \|\dot{\Psi}x\|_{2\delta} $, we use $ e\|x\| \in \call_2 $ to obtain
\begin{equation}\label{bnd-exx}
\beta \|ex^\top x\|_{2\delta}^2\leq  \bm{c}\,\beta + \bm{c}\,\beta \big\| \|e\|\|x\|\, \ell \big\|_{2\delta}^2.
\end{equation}
Lastly, we have $ \|e\|_{2\delta}\leq  \bm{c} $ since $ e\in\call_2 $.

Having found the relations \eqref{bnd-dpsi-x}, \eqref{bnd-psi-dx}, \eqref{bnd-exx}, and $ \|e\|_{2\delta}\leq  \bm{c} $, we conclude from \eqref{psi-x-bnd} and \eqref{l-bnd} that for any $ \delta\in[0,\min\{1,2\alpha_0\}) $, the following implication holds for all $ \beta>1 $:
\begin{equation*}
\ell^2\leq \bm{c}\,\beta^3+\bm{c}\,\beta^{-1} (\big\| \|\dot{\Psi}\|\, \ell\big\|_{2\delta}^2+\|\ell\|_{2\delta}^2)+\bm{c}\, \beta \big\| \|e\|\|x\|\, \ell \big\|_{2\delta}^2.
\end{equation*}
The above inequality provides an implicit upper bound of $ \ell $ which includes the $ \call_2 $ signals $ \dot{\Psi},e\|x\| $. Next, we obtain an explicit upper bound of $\ell $ to conclude its boundedness. For that, we use the definition of the exponentially weighted $\call_2 $ norm to deduce that 
\begin{equation*}
\ell^2(t)\leq \bm{c}\,\beta^3+\bm{c} \int_{0}^t \exp\big(-\delta(t-\tau)\big)k(\tau)\ell^2(\tau)d\tau,\quad \forall t\geq 0,
\end{equation*}
where
\begin{align}\label{k-def}
k(\tau):=\beta^{-1}\|\dot{\Psi}(\tau)\|^2+\beta^{-1}+ \beta \|e(\tau)\|^2\|x(\tau)\|^2.
\end{align}
It then follows from Bellman-Gronwall Lemma \cite[Lem. 3.3.9]{ioannou2012robust} that
\begin{equation}\label{l-explicit}
\ell^2(t)\leq \bm{c}\,\beta^3 \Phi(t,0)+\bm{c}\,\beta^3 \delta \int_{0}^{t}\Phi(t,\tau)d\tau,\quad \forall t\geq 0,
\end{equation}
where
\begin{equation*}
\Phi(t,\tau):=\exp\Big(-\delta(t-\tau)+\bm{c}\int_{\tau}^{t}k(s)ds\Big).
\end{equation*}
Since $ \dot{\Psi},e\|x\|\in\call_2 $, we obtain from \eqref{k-def} that
\begin{equation*}
\bm{c}\int_{\tau}^{t}k(s)ds\leq \bm{c}(\beta^{-1}+\beta)+\bm{c}\,\beta^{-1}(t-\tau),
\end{equation*}
and this implies that
\begin{equation*}
\Phi(t,\tau)\leq \bm{c}^{(\beta^{-1}+\beta)}\exp\big(-(\delta-\bm{c}\,\beta^{-1}) (t-\tau)\big).
\end{equation*}
Now select $\delta>0$ in the interval $(0,\min\{1,2\alpha_0\})$. Also
note that $ \beta>1 $ can be selected independent of $\delta$ and $ \bm{c} $ is oblivious of  $ \beta $. Thus, we choose $ \beta $  sufficiently large such that $ \delta-\bm{c}\,\beta^{-1}>0 $. It then follows from the inequality above and  \eqref{l-explicit}  that $ \ell\in\call_\infty $. Therefore, bearing \eqref{bound-on-xzw} in mind, we conclude that all  signals of the closed-loop system are uniformly bounded.
\squarend

\end{document}